\def\E{\mathbb{E}}
\def\R{\mathbb{R}}
\def\P{\mathcal{P}}
\newtheorem{thm}{Theorem}[section]
\newtheorem{ass}{Assumption}
\newtheorem{lem}[thm]{Lemma}
\newtheorem{prop}[thm]{Proposition}
\newtheorem{rem}{Remark}[section]
\numberwithin{equation}{section}
\begin{document}
\title{Unbiased Monte Carlo estimate of stochastic differential equations expectations}
\author{Mahamadou Doumbia
  \and
  Nadia Oudjane \thanks{EDF R\&D \& FiME, Laboratoire de Finance des March\'es de l'Energie (www.fime-lab.org)}
  \and
  Xavier  Warin  \footnotemark[1] }
\maketitle

\begin{abstract}
  We propose an unbiased Monte Carlo method to compute $\E(g(X_T))$ where $g$ is  a  Lipschitz function and $X$  an Ito process.
  This approach extends the method proposed in \cite{HTT} to the case where $X$ is solution of a multidimensional stochastic differential equation with varying drift and diffusion coefficients. A variance reduction method relying on interacting particle systems is also developed. 
\vspace{5mm}

\noindent{\bf Key words:} unbiased estimate, linear parabolic PDEs, interacting particle systems

 \noindent{\bf MSC2010:} Primary 65C05, 60J60; secondary 60J85, 35K10 
\end{abstract}

\section{Introduction}
Let $d\ge 1$ and $W$ be a $d-$dimensional Brownian motion.
We introduce the process $X$ defined as  the unique strong solution of the multi-dimensional Stochastic differential Equation (SDE) with coefficients satisfying the usual Lipschitz conditions  :
\begin{equation}
  \label{eq:sde}
  \left \{
  \begin{array}{lll}
    d X_t^{0,x_0} & =& b(t,X_t^{0,x_0}) dt + \sigma(t,X_t^{0,x_0}) dW_t, \\
    X_0^{0,x_0}  & =& x_0,
  \end{array}
  \right.
\end{equation}
where $b :[0,T] \times \R^d \longrightarrow \R^d$ is the drift and $\sigma : [0,T] \times  \R^d \longrightarrow {\cal S}^d$   is the diffusion  of the process, ${\cal S}^d$ being the set of $d \times d$ dimensional matrices.

In this paper, we are interested in a Monte Carlo approach to compute an expectation of the form
\begin{equation}
\label{eq:expectation}
u(t,x):=\E[g(X_T^{t,x})]\ .
\end{equation}
When no explicit solution is available, the classical method to solve equation \eqref{eq:expectation} consists in  using a  discretization scheme of \eqref{eq:sde}  (for example the Euler scheme~\cite{kloeden}, the Milstein scheme~\cite{milstein}, or the Burrage scheme~\cite{burrage}) and the error can be decomposed as a sum of an error due to the discretization time step $\delta t$ and a statistical error of order $N^{-1/2}$ due to the Monte Carlo method for a number $N$ of simulations.\\
In principle, this  bias/variance tradeoff should carefully be adjusted in order to optimize the rate of convergence. This type of analysis has been conducted in~\cite{Duffie} showing that, for instance with the the simple \textit{Euler Monte Carlo} method, (using the Euler scheme to discretize the time), the best choice of time step $\delta t$ as a function of the sample size $N$ would lead to a rate of order $c_N^{-\frac{1}{3}}$, where $c_N=N/\delta t$ measures the computing time. Hence the combination of the bias and variance error deteriorates the standard rate $c_N^{-1/2}$, due to the statistical error, when $X$ is easily simulatable and $c_N=N$. Moreover, in practice it is difficult to evaluate properly the bias error so that the optimal tradeoff is rarely practicable. \\
The \textit{Multilevel Monte Carlo} (MLMC) method introduced in~\cite{Giles} is a way to improve the bias/variance tradeoff and to reduce the variance by combining several \textit{Euler-Monte Carlo} estimates, associated with different time discretization steps.  The idea is then to adjust judiciously the size of the sample simulated for each discretization \textit{level}, in order to achieve a better rate of convergence. \\
This approach has been extended in~\cite{Rhee} allowing for an infinite number of levels so that the bias vanishes. The estimate is then expressed as an infinite sum (over the levels), which is randomized by introducing a probability distribution driving the levels.  
However, when the order of the time discretization scheme is not sufficiently high, this method results in an infinite variance estimate. More precisely,  as soon as the time discretization scheme implies a strong error greater than or equal to the order $\sqrt{\delta t}$, either the variance or the computing time blows up. Unfortunately this situation includes the case of the Euler scheme, which is so far the most widely usable discretization scheme in multidimensional cases. \\
This approach has been improved in~\cite{Andersson}, where the authors rely on the parametrix expansion presented in~\cite{Bally} to propose a finite variance estimate. More specifically, the parametrix method provides a precise expansion of the expected difference considered at two successive levels in terms of a difference between the infinitesimal generator, $\mathcal{L}$, associated with~\eqref{eq:sde} and the one associated with the same SDE with frozen coefficients at a given point, as defined hereafter by~\eqref{eq:tildeL}. Finally, importance sampling is used to change the levels distribution in order to control the variance.  
These developments lead to the \textit{backward simulation method} or the \textit{forward simulation method}, depending on whether  $\mathcal{L}$ or its adjoint is used to represent the expectation. 
The backward method consists in generating some independent and identically distributed (i.i.d.) Euler type discretizations of a process, at random discrete times, from time $T$ to time $t$. The payoff function, $g$ is used as initial distribution at time $T$ and the estimator results from a weighted average over the trials that hit the initial point, $x$, at time $t$. Therefore, this approach requires the payoff $g$ to be integrable and is limited to small dimensions (for which the probability of reaching a given point can efficiently be computed).
The forward method consists in generating some i.i.d. Euler type discretizations of~\eqref{eq:sde} at random discrete times from time $t$ to $T$ and then computing a weighted average of the payoff function evaluated at the final points. In both methods the weights depend on the drift and diffusion coefficients $b$ and $\sigma$ evaluated along the simulated path. However, the forward approach relies on a stronger regularity assumption on the SDE coefficients. In particular, the related weights involve the first derivatives of the drift and the first and second derivatives of the volatility function.  \\
Another approach called \textit{Exact simulation} was initialized in~\cite{Beskos}. The idea relies on Lamperti transform to come down to a unit diffusion process. It has been extended to more general SDEs in for instance~\cite{Chen,Jourdain}. However, the Lamperti transform is limited to the one dimensional case and extensions to the multidimensional are still limited to some specific cases. 

In this paper, we propose to extend  a method originally developed in \cite{HTT}.
The main idea developed in this seminal paper is to start by simulating exactly a SDE :
%
\begin{eqnarray*}
  \left \{
  \begin{array}{lll}
    d Y_t^{0,x_0} & =& \hat b(t,Y_t^{0,x_0}) dt + \hat \sigma(t,Y_t^{0,x_0}) dW_t \\
    Y_0^{0,x_0}  & = &x_0\ ,
  \end{array}
  \right.
\end{eqnarray*}
where the coefficients $\hat b$ and $\hat \sigma$ are updated at independent exponential switching times.
Then the change in coefficients in SDE \eqref{eq:sde} is taken into account in an expectation representation via weights derived from the automatic differentiation technique developed in \cite{fournie}. 
By carefully choosing the coefficients $\hat b$, $\hat \sigma$, the authors were able to provide a finite variance method in the case where the diffusion coefficient is constant or with a general diffusion term but without drift and in dimension one. 
However, the variance of the resulting estimator is proved to be infinite in the most general case.
One interest of this approach which is very similar to the forward parametrix representation~\cite{Bally, Andersson} is that the weights do not involve any derivatives of the coefficients $b$ or $\sigma$ so that no differentiability assumptions on those coefficients is required. 
Besides, one major motivation of this type of approach goes beyond the scope of the present paper. The idea is to generalize the branching diffusion representation of  nonlinear Partial Differential Equations (PDEs)  considered in~\cite{labordere, HTTbsde} to a more general class of nonlineariries. One step in that direction has already been done in~\cite{HOTTW} with an extension of the branching diffusion representation to a class of semilinear PDEs. \\ 
To bypass the infinite variance obstacle faced in~\cite{HTT}, the idea developed in the present paper consists in extending the original framework to more general switching times and exploit the switching time distribution to control the estimator variance. Notice that the same idea has been independently investigated in~\cite{Andersson} to control the variance of the parametrix representation proposed in~\cite{Bally}. 
We prove that under suitable assumptions on the switching times distribution, we can provide a finite variance estimate of the solution of~\eqref{eq:expectation} in the most  general case with drift and diffusion coefficients both varying. For instance, the gamma distribution is proved to verify those assumptions as soon as the shape parameter $\kappa$ satisfies $\kappa \le \alpha \wedge \frac{1}{2}$, when the coefficients $b$ and $a:=\sigma\sigma^\top$ are supposed to be uniformly $\alpha$-H\"older continuous w.r.t the time variable. 
Another contribution consists in proposing an original interacting particle scheme that helps to stabilize even more the estimator. This approach results in a new estimator combining both branching  and interacting particle techniques. The new estimator is proved to be unbiased with finite variance. Finally, numerical tests confirm the interest of our new algorithm showing significant variance reduction in various examples.    

\section{Notations}
Let  $C^{1,2}_b([0,T]\times \R^d,\R)$ denote the set of continuously differentiable bounded functions with bounded derivatives of order $1$ for the time variable and bounded derivatives up to order $2$ for the space variable. Let $\mathcal{L}$ denote the infinitesimal generator associated with~\eqref{eq:sde} such that for any sufficiently regular function $\varphi\,:\, [0,T]\times \R^d \mapsto \R$ in the domain of $\mathcal{L}$, $\mathcal{L}\varphi$ is given as the real valued function such that   
\begin{equation}
\label{eq:L}
( \mathcal{L}\varphi )(t,x)=b(t,x).D\varphi (t,x)+\frac{1}{2} a(t,x): D^2 \varphi (t,x)  \ , \quad \textrm{for all}\ (t,x)\in [0,T]\times \R^d\ ,
\end{equation}
where $a(t,x):= \sigma(t,x)\sigma(t,x)^\top$,  $A:B := tr(A B^\top)$ and $D$ (resp. $D^2$) denotes the differential operator of order $1$ (resp. of order 2) w.r.t. the space variable $x$.  
Let us consider a real valued Lipschitz continuous function $g$ defined on $\R^d$. 
By the Feynman-Kac formula it is well-known that if there exists $v^\ast\in C^{1,2}_b([0,T]\times \R^d,\R)$ solution of the linear Partial Differential Equation (PDE)   
\begin{equation}
\label{eq:PDE1}
\left\{
\begin{array}{l}
\partial_t v+\mathcal{L} v=0\\
v(T,x)=g(x)\ ,
\end{array}
\right .
\end{equation}
then this PDE has a unique classical solution $v^\ast(t,x)=u(t,x)=\E[g(X_T^{t,x})]$.
In the sequel $\Vert x\Vert $ stands for the $L_\infty$ norm of a vector or a matrix $x$.\\
First we introduce an intermediary assumption that will be relaxed for our main results:
\begin{ass}
\label{ass:unique1}
 The linear PDE~\eqref{eq:PDE1} admits a unique classical solution $v^\ast\in C^{1,2}_b$. 
\end{ass}
All along this paper,  the following assumption will be in force.
\begin{ass}
\label{ass:unique2}
\begin{enumerate}
\item The diffusion  $\sigma (t,x)$ is non-degenerated such that for some constant $\epsilon_0 >0$:
  $$  a(t,x) \ge \epsilon_0 \mathbb{I}, \quad \forall (t,x)\in [0,T]\times \R^d. $$
\item $b$ and $a$ are uniformly Lipschitz w.r.t. the space variable i.e. there exists a finite constant $L$ such that for any $(t,x,x')\in [0,T]\times \R^d\times \R^d$ 
$$
\| b(t,x)-b(t,x')\| +\| a(t,x)-a(t,x')\| \leq L\| x-x'\|\ .
$$
	\item There exists $\alpha \in (0,1]$ such that $b$ and $a$ are uniformly $\alpha$-H\"older continuous w.r.t. variable $t$ i.e. there exists  a finite constant $H$ such that for any $(t,t',x)\in [0,T]\times [0,T]\times \R^d$
$$
\| b(t,x)-b(t',x)\| +\| a(t,x)-a(t',x)\| \leq H\vert t-t'\vert^{\alpha}\ .
$$
\end{enumerate}
\end{ass}
For a fixed point $(\tilde t,\tilde x)\in [0,T]\times \R^d$, we introduce some operators and processes that will be useful in the sequel  
\begin{itemize}
	\item $ \mathcal{L}^{\tilde t,\tilde x}$ the differential operator similar to $\mathcal{L}$ with the drift and diffusion  frozen at $(\tilde t,\tilde x)$ such that for any regular function $\varphi$ in the domain of $\mathcal{L}^{\tilde t,\tilde x}$ 
\begin{equation}
\label{eq:tildeL}
\mathcal{L}^{\tilde t,\tilde x}
\varphi (t,x)=
b(\tilde t,\tilde x).D\varphi (t,x)+\frac{1}{2} a(\tilde t,\tilde x) : D^2 \varphi (t,x) \ , \quad 
\textrm{for all}\ (t,x)\in [0,T]\times \R^d \ ,
\end{equation}
	\item $(\tilde X_t^{\tilde t,\tilde x, t_0,x_0})_{t\geq t_0}$ the Gaussian process with infinitesimal operator $\mathcal{L}^{\tilde t,\tilde x}$ defined by 
	\begin{equation}
	\label{eq:tildeX}
	\tilde X^{\tilde t,\tilde x,t_0,x_0}_t=x_0+b(\tilde t, \tilde x) (t-t_0)+\sigma(\tilde t, \tilde x) (W_t-W_{t_0})\ .
	\end{equation}
	for a given initial condition $(t_0,x_0)\in [0,T]\times \R^d$.
	\item $h^{\ast,\tilde t,\tilde x}\,:\, [0,T]\times \R^d \mapsto \R$ involving the unique solution $v^\ast$ of~\eqref{eq:PDE1} is defined by
	\begin{equation}
	\label{eq:h}
	h^{\ast,\tilde t,\tilde x}(t,x):=(b(t,x)-b( \tilde t, \tilde x) ).Dv^\ast(t,x)+\frac{1}{2}(a(t,x)-a(\tilde t, \tilde x)):D^2v^\ast (t,x)\ .
	\end{equation}
	Notice that $h^{\ast,\tilde t,\tilde x}$ is a well defined continuous function since $v^\ast \in C^{1,2}_b$ and in particular 
	\begin{equation}
	\label{eq:rqueh}
	h^{\ast,\tilde t,\tilde x}(t,x)=\mathcal{ L}v^\ast(t,x)-\mathcal{L}^{\tilde t, \tilde x}v^\ast(t,x)\quad \textrm{for all}\ (t,x)\in [0,T]\times \R^d \ .
	\end{equation}
\end{itemize}

\section{Probabilistic representation using a regime switching process}
Recalling~\cite{HTT}, the following representation  holds
\begin{lem}
\label{lem:Represent}
Suppose that  Assumptions ~\ref{ass:unique1} and~\ref{ass:unique2}  hold  and $\tilde X^{\tilde t,\tilde x}$ is the Gaussian process defined in~\eqref{eq:tildeX}, then $u$ defined by~\eqref{eq:expectation} and its (bounded and continuous) derivatives $Du$ and $D^2u$ are solutions of the system
\begin{equation}
\label{eq:Represent}
\begin{array}{lll}
u(t,x)&=&\E[g(\tilde X_T^{\tilde t, \tilde x,t,x})+\int_t^T H^{\tilde t,\tilde x}(s,\tilde X^{\tilde t,\tilde x, t ,x }_s,Du(s,\tilde X^{\tilde t,\tilde x, t, x}_s),D^2u(s,\tilde X^{\tilde t,\tilde x, t,x}_s))\,ds]\\
Du(t,x)&=&\E[g(\tilde X_T^{ \tilde t,\tilde x, t,x})\mathcal{M}^{\tilde t,\tilde x}_{t,T}+  \\
&  & \int_t^T H^{\tilde t,\tilde x}(s,\tilde X^{\tilde t,\tilde x,t,x}_s,Du(s,\tilde X^{\tilde t , \tilde x, t,x}_s),D^2u(s,\tilde X^{\tilde t , \tilde x, t,x}_s))\mathcal{M}^{\tilde t,\tilde x}_{t,s}\,ds]\\
D^2u(t,x)&=&\E[g(\tilde X_T^{  t,  x, t,x})\mathcal{V}^{ t, x}_{t,T}+  \\
& & \int_t^T H^{ t,x}(s,\tilde X^{ t,  x, t,x}_s,Du(s,\tilde X^{ t,  x, t,x}_s),D^2u(s,\tilde X^{ t,  x, t,x}_s))\mathcal{V}^{t, x}_{t,s}\,ds]\ ,
\end{array}
\end{equation}
where for any $(\tilde t,\tilde x)\in [0,T]\times \R^d$ the function $H^{\tilde t,\tilde x}:[0,T]\times\R^d\times \R^d\times {\cal S}^d	\mapsto \R$ is such that 
\begin{equation}
\label{eq:H}
H^{\tilde t,\tilde x}(t,x,y,z)=(b(t,x)-b( \tilde t,\tilde x) ).y+\frac{1}{2}(a(t,x)-a(\tilde t,\tilde  x)):z\ ,
\end{equation}
$\mathcal{M}^{\tilde t,\tilde x}_{t,s}$ and $\mathcal{V}^{\tilde t,\tilde x}_{t,s}$ are respectively the first and second order Malliavin weights associated with the process $\tilde X^{\tilde t,\tilde x}$ that is using $\delta_{t,s} W = W_s-W_t$
\begin{equation}
\label{eq:Malliavin}
\mathcal{M}^{\tilde t,\tilde x}_{t,s}:=(\sigma(\tilde t,\tilde x)^{-1})^\top\frac{\delta_{t,s} W }{s-t}\ ,\quad\textrm{and}\quad \mathcal{V}^{\tilde t,\tilde x}_{t,s}:=(\sigma (\tilde t,\tilde x)^{-1})^\top\frac{\delta_{t,s} W \delta_{t,s} W ^\top -(s-t) \mathbb{I}}{(s-t)^2} \sigma (\tilde t,\tilde x)^{-1} \ .
\end{equation}
\end{lem}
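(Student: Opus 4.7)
The approach I would take is to build on the standard Feynman--Kac representation for $u$ and then invoke the Bismut--Elworthy--Li / automatic differentiation formulas of~\cite{fournie} for the Gaussian proxy $\tilde X^{\tilde t,\tilde x}$, whose density is explicit.

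First, under Assumption~\ref{ass:unique1}, $u=v^\ast\in C^{1,2}_b$, so $Du, D^2u$ are automatically bounded and continuous. I would rewrite the PDE $\partial_t u + \mathcal{L} u = 0$ using the frozen generator, noting that by~\eqref{eq:rqueh},
\[
\partial_t u + \mathcal{L}^{\tilde t,\tilde x} u \;=\; -h^{\ast,\tilde t,\tilde x} \;=\; -H^{\tilde t,\tilde x}(\cdot,\cdot,Du,D^2u).
\]
Applying It\^o's formula to $s\mapsto u(s, \tilde X_s^{\tilde t,\tilde x, t, x})$ on $[t,T]$, whose generator is exactly $\mathcal{L}^{\tilde t,\tilde x}$ with \emph{constant} diffusion $\sigma(\tilde t,\tilde x)$, gives
\[
u(T, \tilde X_T) - u(t,x) \;=\; -\int_t^T H^{\tilde t,\tilde x}(s, \tilde X_s, Du(s,\tilde X_s), D^2u(s, \tilde X_s))\,ds \;+\; \int_t^T Du(s,\tilde X_s)^\top \sigma(\tilde t,\tilde x)\,dW_s.
\]
The stochastic integrand is bounded, so the martingale has zero expectation; combined with $u(T,\cdot)=g$, this yields the first line of~\eqref{eq:Represent}.

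For the second line, observe that $\tilde X_s^{\tilde t,\tilde x, t, x} = x + b(\tilde t,\tilde x)(s-t) + \sigma(\tilde t,\tilde x)(W_s - W_t)$ depends on $x$ only through a translation of a Gaussian with smooth density, so a direct Malliavin integration by parts gives, for any test function $\phi$ of polynomial growth,
\[
D_x\E[\phi(\tilde X_s^{\tilde t,\tilde x,t,x})] = \E[\phi(\tilde X_s^{\tilde t,\tilde x,t,x})\,\mathcal{M}_{t,s}^{\tilde t,\tilde x}], \qquad D_x^2\E[\phi(\tilde X_s^{\tilde t,\tilde x,t,x})] = \E[\phi(\tilde X_s^{\tilde t,\tilde x,t,x})\,\mathcal{V}_{t,s}^{\tilde t,\tilde x}].
\]
Differentiating the first line of~\eqref{eq:Represent} once in $x$ and commuting $D_x$ with the expectation and the time integral (by dominated convergence, using the bounds on $g, Du, D^2u, b, a$ together with the fact that the $L^2$ norm of $\mathcal{M}_{t,s}$ is of order $(s-t)^{-1/2}$, which is time-integrable) immediately yields the second line.

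The main obstacle is the third line: naively applying $D_x^2$ produces the weight $\mathcal{V}_{t,s}^{\tilde t,\tilde x}$, whose $L^2$ norm is of order $(s-t)^{-1}$, so the integrand is \emph{not} integrable in $s$ near $t$ in general. This is precisely why the third line is stated with the specific freezing $\tilde t=t$, $\tilde x=x$: with this choice, $\tilde X_t^{t,x,t,x}=x$ and
\[
H^{t,x}(s, \tilde X_s^{t,x,t,x}, Du, D^2u) = (b(s,\tilde X_s) - b(t,x))\cdot Du + \tfrac{1}{2}(a(s,\tilde X_s) - a(t,x)){:}D^2u
\]
vanishes at $s=t$. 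Assumption~\ref{ass:unique2} (Lipschitz in space, $\alpha$-H\"older in time) together with $\|\tilde X_s - x\|_{L^p} = O(\sqrt{s-t})$ yields $\|H^{t,x}(s,\tilde X_s,\cdot,\cdot)\|_{L^2} = O((s-t)^{\alpha\wedge 1/2})$, which combined with $\|\mathcal{V}_{t,s}^{t,x}\|_{L^2}=O((s-t)^{-1})$ gives an $L^1$ integrand of order $(s-t)^{(\alpha\wedge 1/2)-1}$, integrable in $s$ on $[t,T]$. The second Malliavin formula above, applied twice to the first line evaluated at $(\tilde t,\tilde x)=(t,x)$, then produces the third line. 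The delicate technical point throughout is legitimating the interchange of $D_x^2$, expectation and time integral, and it is exactly this step that forces the freezing $(t,x)$.
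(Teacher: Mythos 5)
Your overall route — Feynman–Kac/It\^o for the first identity, Malliavin (Bismut–Elworthy–Li / likelihood-ratio) differentiation of the Gaussian proxy for the derivatives, with the key observation that $H^{t,x}$ vanishes at $s=t$ to tame the $(s-t)^{-1}$ singularity of $\mathcal{V}_{t,s}^{t,x}$ — matches the strategy of the paper (including the estimate $(s-t)^{\alpha\wedge 1/2}\cdot(s-t)^{-1}$ being $L^1$). The first two identities are handled correctly, and your dominated-convergence argument for passing $D_x$ under the integral sign in the second line is essentially the same as the paper's bound on $D\phi^{\tilde t,\tilde x}(s,t',x')$.

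However, the sentence ``The second Malliavin formula above, applied twice to the first line evaluated at $(\tilde t,\tilde x)=(t,x)$, then produces the third line'' conceals a genuine gap. There are two natural readings, and neither works under the stated hypotheses: (a) if you substitute $(\tilde t,\tilde x)=(t,x)$ into the representation \emph{before} differentiating, the frozen coefficients $b(t,x)$ and $\sigma(t,x)$ appearing in the process $\tilde X^{t,x,t,x}$ and in the Malliavin weights now depend on $x$, and differentiating them would require pointwise differentiability of $b$, $\sigma$, which Assumption~\ref{ass:unique2} does not grant (only Lipschitz/H\"older); (b) if instead you differentiate the second line in $x$ while holding $(\tilde t,\tilde x)$ fixed and generic, intending to set $(\tilde t,\tilde x)=(t,x)$ only at the end, the resulting integrand $\E[H^{\tilde t,\tilde x}(s,\cdot)\mathcal{V}^{\tilde t,\tilde x}_{t,s}]$ is of order $(s-t)^{-1}$ off the diagonal $\tilde x\neq x$ and hence not integrable near $s=t$, so dominated convergence cannot justify the interchange. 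Your proposal correctly identifies why the diagonal freezing is what makes the integrand integrable, but it does not explain how to \emph{reach} the diagonal through a legitimate differentiation step. The paper resolves exactly this difficulty in Appendix~\ref{sec:Technics} via a mean-value-theorem device: it uses the freedom in $(\tilde t,\tilde x)$ (the function $\Psi$ there does not depend on the freezing point) to pick $\tilde x=\hat x(x,h)$ coinciding with the mean value point of the difference quotient, so that the integrand is always evaluated on the diagonal $\tilde x = x'$ where the $(s-t)^{(\alpha\wedge 1/2)-1}$ bound holds, and then passes to the limit $h\to 0$. Without an argument of this type, or an equivalent substitute, the third identity remains unproved.
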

\begin{proof}
The proof relies on the uniqueness property of classical solutions of PDEs satisfying the Feynman-Kac representation. 
Notice that under Assumptions ~\ref{ass:unique1} ~\ref{ass:unique2}, $u$ is the unique classical solution of~\eqref{eq:PDE1}. Of course, thanks to equation~\eqref{eq:rqueh}, for any $(\tilde t,\tilde x)\in [0,T]\times \R^d$ $u$ is also a $C^{1,2}_b$ solution of the following linear PDE
$$
\partial_t u +\mathcal{ L}^{\tilde t,\tilde x} u+h^{\ast,\tilde t,\tilde x} = 0 \ .
$$
Then one can use again Feynman-Kac formula to represent the unique solution $u$ of the above PDE as
\begin{equation}
\label{eq:uFK}
u(t, x)=\E[g(\tilde X_T^{\tilde t, \tilde x, t , x})+\int_{t}^T h^{\ast,\tilde t,\tilde x}(s,\tilde X^{\tilde t,\tilde x, t, x}_s)\,ds]\ .
\end{equation}
Finally observe that 
\begin{equation}
\label{eq:hH}
h^{\ast,\tilde t,\tilde x}(s,\tilde X^{\tilde t,\tilde x, t, x}_s)=H^{\tilde t,\tilde x}(s,\tilde X^{\tilde t,\tilde x,t, x}_s,Du(s,\tilde X^{\tilde t,\tilde x}_s),D^2u(s,\tilde X^{\tilde t,\tilde x, t,x }_s))\ .
\end{equation}

The equations relative to $Dv$ and $D^2v$ are obtained by applying Elworthy's formula~\cite{fournier1} (which simply results here in the Likelihood ratio of Broadie and Glasserman~\cite{BG96}) in~\eqref{eq:uFK} and by using some technical estimates placed in the Appendix~\ref{sec:Technics} to be able to differentiate under the time integral. 

\end{proof}

Let $\tau$ be a random time independent of the Brownian $W$ following the density $f$ supposed to be strictly positive on $[0,\infty]$ and $\P[\tau >T] >0$. One can rewrite representation~\eqref{eq:Represent} by using a change of measure to replace the time integral by an expectation according to the random time $\tau$. 
\begin{eqnarray}
\label{eq:RepresentTau}
u(t,x)&=&\frac{\E[g(\tilde X_T^{\tilde t,\tilde x, t, x})\mathbf{1}_{\tau\geq T-t}]}{1-F(T-t)}\nonumber \\
&&+\E[ \frac{H^{\tilde t,\tilde x}(t+\tau,\tilde X^{\tilde t , \tilde x, t,x}_{t+\tau},Du(t+\tau,\tilde X^{\tilde t ,\tilde x, t,x}_{t+\tau}),D^2u(t+\tau,\tilde X^{\tilde t , \tilde x, t,x}_{t+\tau}))}{f(\tau)}\,\mathbf{1}_{\tau<T-t}]\nonumber \\
Du(t,x)&=&\frac{\E[g(\tilde X_T^{\tilde t,\tilde x, t, x})\mathcal{M}^{\tilde t,\tilde x}_{t,T}\mathbf{1}_{\tau\geq T-t}]}{1-F(T-t)} \nonumber \\
&&+\E[ \frac{H^{\tilde t,\tilde x}(t+\tau,\tilde X^{\tilde t , \tilde x, t,x}_{t+\tau},Du(t+\tau, \tilde X^{\tilde t ,\tilde x, t,x}_{t+\tau}),D^2u(t+\tau,\tilde X^{\tilde t , \tilde x, t,x}_{t+\tau}))}{f(\tau)}\mathcal{M}^{\tilde t,\tilde x}_{t,t+\tau}\,\mathbf{1}_{\tau<T-t}]\nonumber \\
D^2u(t,x)&=&\frac{\E[g(\tilde X_T^{ t, x, t, x})\mathcal{V}^{ t, x}_{t,T}\mathbf{1}_{\tau\geq T-t}]}{1-F(T-t)} \nonumber \\
&&+\E[ \frac{H^{ t, x}(t+\tau,\tilde X^{ t ,  x, t,x}_{t+\tau},Du(t+\tau,\tilde X^{ t , x, t,x}_{t+\tau}),D^2u(t+\tau,\tilde X^{ t ,  x, t,x}_{t+\tau}))}{f(\tau)}\mathcal{V}^{ t, x}_{t,t+\tau}\,\mathbf{1}_{\tau<T-t}]\nonumber \\
\end{eqnarray}
where $F$ is the cumulative distribution of $f$. 
We will now apply recursively this representation~\eqref{eq:RepresentTau} by considering a sequence of i.i.d. random times $(\tau_k)$. 

Let us introduce a non regular (stochastic) mesh of the interval $[t_0,T]$, 
\begin{equation}
\label{eq:pi}
\pi:=(T_0:=t_0<T_1<\cdots <T_k\cdots<T_{N_T}<T_{N_T+1}=T)\ ,
\end{equation}
characterized by the Markov chain $(T_k)$ defined by  
\begin{equation}
\label{eq:tau}
\left\{
\begin{array}{lll}
T_0&=&t_0\\
T_{k+1}&=&T_k+\delta T_{k+1}\ ,\textrm{for $k\in \mathbb{N}$ where}\\
\delta T_{k+1}&=&\tau_{k+1}\wedge (T-(T_k+\tau_{k+1}))^+\ ,
\end{array}
\right .
\end{equation}
where $(\tau_k)$ is an i.i.d. sequence of random times distributed according the common probability density $f$. 
Notice that $(T_k)$ defines a Markov chain with an absorbing state, $T$. $(T_k)$ will define the so-called \textit{switching time}. \\
The random integer $N_T$ is defined as the following stopping time 
\begin{equation}
\label{eq:NT}
N_T=\inf \{n\ \vert\ T_{n+1}\geq T\}\ .
\end{equation}
Now notice by the law of large numbers that $ \frac{1}{n} \sum_{k=1}^n \tau_k \longrightarrow \E[\tau] > 0$ so $ \sum_{k=1}^n \tau_k \longrightarrow \infty$ almost surely so  $N_T$ is almost surely finite.
In the sequel, we will consider an i.i.d. sequence $(\tau_k)$ of gamma variables with parameters $(\kappa>0,\theta>0)$ recalling that the gamma density with parameter $(\kappa>0,\theta>0)$ is given by 
\begin{equation}
\label{eq:gammaDens}
f_\Gamma^{\kappa,\theta}(s)=\frac{s^{\kappa-1}e^{-s/\theta}}{\Gamma(\kappa)\theta^\kappa}\ ,\quad \textrm{for all}\ s>0\ ,
\end{equation}
where $\Gamma$ is the gamma Euler function. \\

For a given mesh $\pi$ (defined as in~\eqref{eq:pi}~\eqref{eq:tau}), we consider the following sequence (defining a Markov chain conditionally to the mesh $\pi$)
\begin{equation}
\label{eq:Xk}
\left \{
\begin{array}{l}
\bar X_0=X^{t_0,x_0}_{T_0}=x_0\\
\bar X_{k+1}=\bar X_k+b(T_k,\bar X_k)\delta T_{k+1}+\sigma(T_k,\bar X_k)\delta W_{k+1}\ ,
\end{array}
\right .
\end{equation}
where $\delta W_{k+1}:=W_{T_{k+1}}-W_{T_k}$. 
For the sake of simplicity, we will often note $b_k$ or $\sigma_k$ instead of $b(T_k,\bar X_{k})$ or $\sigma(T_k,\bar X_{k})$. 

Using representation~\eqref{eq:RepresentTau} with $(\tilde t, \tilde x) =( T_{k},\bar X_{k})$ and $\tau = \tau_{k+1}$, conditioning with respect to $(\tilde t, \tilde x)$  one gets for any integer $k\geq 0$
\begin{eqnarray}
\label{eq:RepresentTk}
u(T_k,\bar X_k)&=&\frac{\E[g(\tilde X^{T_k,\bar X_k}_T)\mathbf{1}_{T_{k+1}=T}]}{1-F(T-T_k)}+\E[ H_{k+1}\,\mathbf{1}_{T_{k+1}<T}]
\end{eqnarray}
with $ \tilde X^{T_k,X_k}_s := \tilde X^{T_k,X_k,T_k,X_k }_s$ for $s \ge T_k$ and 
$$
H_{k+1}:=\frac{H^{T_k,\bar X_k}(T_{k+1},\bar X_{{k+1}},Du(T_{k+1},\bar X_{{k+1}}),D^2u(T_{k+1},\bar X_{{k+1}}))}{f(\delta T_{k+1})}\ .
$$
The derivatives $Du$ and $D^2u$ in $H_{k+1}$ are given by applying  the representation~\eqref{eq:RepresentTau} with $(\tilde t, \tilde x) =( T_{k+1},\bar X_{k+1})$ and $\tau = \tau_{k+2}$, conditioning with respect to $(\tilde t, \tilde x)$  one gets for any integer $k\geq 0$
\begin{eqnarray*}
Du(T_{k+1},\bar X_{k+1})&=&\frac{\E[g(\tilde X^{T_{k+1},\bar X_{k+1}}_T)\mathcal{M}^{T_{k+1},\bar X_{k+1}}_{T_{k+1},T}\mathbf{1}_{T_{k+2}=T}]}{1-F(T-T_{k+1})}+\E[ H_{k+2}\mathcal{M}^{T_{k+1},\bar X_{k+1}}_{T_{k+1},T_{k+2}}\,\mathbf{1}_{T_{k+2}<T}]\nonumber \\
Du(T_{k+1},\bar X_{k+1})&=&\frac{\E[g(\tilde X^{T_{k+1},\bar X_{k+1}}_T)\mathcal{V}^{T_{k+1},\bar X_{k+1}}_{T_{k+1},T}\mathbf{1}_{T_{k+2}=T}]}{1-F(T-T_{k+1})}+\E[ H_{k+2}\mathcal{V}^{T_{k+1},\bar X_{k+1}}_{T_{k+1},T_{k+2}}\,\mathbf{1}_{T_{k+2}<T}]\ .
\end{eqnarray*}
Let us  introduce the sequence of weights $(P_{k})_{k\geq 1}$ such that for $k=1,\cdots , N_T$ 
\begin{equation}
\label{eq:weights}
\left\{
\begin{array}{lll}
P_{k+1}&=& \frac{M_{k+1}+\frac{1}{2}V_{k+1}}{f(\delta T_k)}  \ ,\\
M_{k+1}&=&  \delta b_k . (\sigma_k^{-1})^\top \frac{\delta W_{k+1}}{\delta T_{k+1}},\quad \textrm{with}\quad \delta b_k:=b_k-b_{k-1}\\
V_{k+1}&=&  \delta a_k : (\sigma_k^{-1})^\top \frac{\delta W_{k+1} \delta W_{k+1}^\top -\delta T_{k+1} \mathbb{I}}{(\delta T_{k+1})^2} \sigma_k^{-1}  \ ,\quad \textrm{with}\quad \delta a_k:=a_k-a_{k-1}\ .
\end{array}
\right .
\end{equation}
Following the same lines as the proof of Theorem~2.2 in~\cite{HTT}, one can derive by recurrence  a representation formula for $u$ as the expectation of an exactly simulatable variable.
Before one has to introduce some new assumptions. 
\begin{ass}
\label{ass:Bound}
The coefficients $b$ and $a$ are uniformly bounded 
i.e. there exists a finite constant $M$ such that for any $(t,x)\in [0,T]\times \R^d$ 
$$
\|b_t(x)\|  \leq M\ ,\quad  \|a_t(x)\| \leq M\  
.
$$
\end{ass}
\begin{ass}
\label{ass:gFunc}
The function $g$ is Lipschitz.
\end{ass}
\begin{prop}
\label{prop:Representation}
Under assumptions~\ref{ass:unique2},~\ref{ass:Bound} and~\ref{ass:gFunc}
, the following representation holds
\begin{equation}
\label{eq:Representation}
\begin{array}{lll}
u(t_0,x_0)&:=&\E[g(X^{t_0,x_0}_T)]
=\E[\frac{g(\bar X_{N_T+1})}{1-F(\delta T_{N_T+1})}\prod_{k=2}^{N_T+1}P_k]\ ,\\
\end{array}
\end{equation}
with the convention $\prod_{k\in \varnothing}=1$.
\end{prop}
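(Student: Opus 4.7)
The plan is to iterate the time-randomized representation \reff{eq:RepresentTk} along the random mesh $(T_k)$ of \reff{eq:tau}, following the induction scheme used in Theorem~2.2 of \cite{HTT}. At each stage, applying \reff{eq:RepresentTk} at the current point $(T_k,\bar X_k)$ with $\tau = \tau_{k+1}$ will peel off one ``boundary'' contribution on $\{T_{k+1}=T\} = \{N_T=k\}$, while the remaining ``interior'' term, which depends on $Du$ and $D^2u$ at $(T_{k+1},\bar X_{k+1})$, is unfolded via the second and third lines of \reff{eq:RepresentTau}; the Malliavin weights arising there should, together with the prefactor $1/f(\delta T_{k+1})$ from the $H$ term, be identified via \reff{eq:Malliavin} and the definitions \reff{eq:weights} as exactly $P_{k+2}$.

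I would first carry out the base step $k=0$. On $\{T_1 = T\}$ the boundary contribution from \reff{eq:RepresentTk} is $\E[g(\tilde X^{t_0,x_0}_T)\mathbf{1}_{T_1 = T}]/(1 - F(T - t_0))$; by \reff{eq:tildeX} and the Euler recursion \reff{eq:Xk}, $\tilde X^{t_0,x_0}_T = \bar X_1 = \bar X_{N_T+1}$ on this event and $T - t_0 = \delta T_1 = \delta T_{N_T+1}$, so this term matches the $\{N_T=0\}$ slice of \reff{eq:Representation} with the empty-product convention. On $\{T_1 < T\}$, substituting the representations of $Du(T_1,\bar X_1)$ and $D^2u(T_1,\bar X_1)$ from \reff{eq:RepresentTau} at $(\tilde t,\tilde x)=(T_1,\bar X_1)$ with $\tau = \tau_2$ and using \reff{eq:Malliavin} together with the form \reff{eq:H} of $H$, the accumulated prefactor reduces exactly to $P_2$ in the sense of \reff{eq:weights}.

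Iterating this $n$ times will yield the decomposition
\[
u(t_0,x_0) \;=\; \sum_{j=0}^{n-1}\E\!\left[\frac{g(\bar X_{j+1})}{1-F(\delta T_{j+1})}\prod_{i=2}^{j+1}P_i\,\mathbf{1}_{N_T=j}\right] \;+\; R_n,
\]
where $R_n$ is supported on $\{N_T \ge n\} = \{T_n<T\}$ and carries a factor $\prod_{i=2}^{n}P_i$ multiplying $Du$, $D^2u$ evaluated at $(T_n,\bar X_n)$. The target identity \reff{eq:Representation} will then follow by letting $n\to\infty$: the boundary sum exhausts $\{N_T<\infty\}$, which has full measure since $N_T$ is a.s. finite (as noted just after \reff{eq:NT}), and $R_n$ vanishes by dominated convergence.

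The hard part will be precisely the control of $R_n$ and the justification of $L^1$-integrability at each step of the recursion. Two ingredients will be needed. First, uniform bounds on $u$, $Du$, $D^2u$ over $[0,T]\times \R^d$: under Assumptions~\ref{ass:unique2}, \ref{ass:Bound} and \ref{ass:gFunc} these follow from classical parabolic Schauder theory applied to \reff{eq:PDE1} and effectively supply Assumption~\ref{ass:unique1}, so that Lemma~\ref{lem:Represent} can be invoked at every stage. Second, sufficient moment bounds on the Malliavin-type weights $\mathcal{M}^{T_k,\bar X_k}_{T_k,T_{k+1}}$ and $\mathcal{V}^{T_k,\bar X_k}_{T_k,T_{k+1}}$, which follow from the uniform ellipticity in Assumption~\ref{ass:unique2} and from the conditional-Gaussian structure of $\delta W_{k+1}$ given $\delta T_{k+1}$. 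With these in hand, Fubini's theorem at each iteration combined with dominated convergence closes the argument.
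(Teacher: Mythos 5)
Your recursion scheme is structurally the same as the paper's (which says its proof ``mimics step by step the proof of Theorem~2.2 in [HTT]''), and your identification of the boundary slice on $\{N_T=j\}$ and of the cumulative weight $\prod_{i=2}^{j+1}P_i$ is correct. The problem is in the last paragraph, where you claim that under Assumptions~\ref{ass:unique2}, \ref{ass:Bound} and \ref{ass:gFunc}, ``uniform bounds on $u$, $Du$, $D^2u$ over $[0,T]\times\R^d$\dots follow from classical parabolic Schauder theory\dots and effectively supply Assumption~\ref{ass:unique1}.'' This is not true. Assumption~\ref{ass:gFunc} only asks that $g$ be Lipschitz; for Lipschitz terminal data the second spatial derivative of the solution of~\eqref{eq:PDE1} generically blows up like $(T-t)^{-1/2}$ as $t\to T^-$, so $v^\ast\notin C^{1,2}_b([0,T]\times\R^d)$ and Assumption~\ref{ass:unique1} is simply false in general under the stated hypotheses. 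Your chain of reasoning (Lemma~\ref{lem:Represent} applied at every stage, remainder $R_n$ controlled via boundedness of $Du$, $D^2u$) therefore breaks precisely where you declare it ``effectively supplied.''

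The paper avoids this entirely by a two-step regularization argument that you do not reproduce. Step~1 establishes the representation \emph{under the additional Assumption~\ref{ass:unique1}}, where your recursion is legitimate. Step~2 then removes Assumption~\ref{ass:unique1}: since $b$, $\sigma$, $g$ are Lipschitz, one mollifies to $(b^\epsilon,\sigma^\epsilon,g^\epsilon)$ smooth and bounded so that Assumption~\ref{ass:unique1} does hold for the regularized PDE, applies Step~1 to obtain
\[
u^\epsilon(t_0,x_0)=\E\!\left[\frac{g^\epsilon(\bar X^\epsilon_{N_T+1})}{1-F(\delta T_{N_T+1})}\prod_{k=2}^{N_T+1}P^\epsilon_k\right],
\]
and then sends $\epsilon\to 0$ on both sides, using stability of SDEs and dominated convergence on the probabilistic representation (not on the PDE estimates). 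This is the crucial extra idea your proof is missing: the limit is taken in the representation formula itself, which requires no uniform control of $D^2u$ near $t=T$, whereas your argument requires exactly that control and has no way to obtain it.
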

\begin{rem}
\label{rem:Representation}
\begin{enumerate}
	\item Proposition~\ref{prop:Representation} proves that any $v$ satisfying the equation~\eqref{eq:Represent} is given by the above explicit equation~\eqref{eq:Representation}: this \textit{a posteriori} proves the uniqueness of the solution of~\eqref{eq:Represent}. 
	\item Using an exponential distribution for $f$, one recovers the representation given in  \cite{HTT}.
	\item Representation~\eqref{eq:Representation} is very similar to the forward representation developed in~\cite{Bally} and used in~\cite{Andersson} for the same purpose of unbiased simulation of SDEs expectations. The main interest of~\eqref{eq:Representation} being that the coefficients $b$ and $a$ have no need to be differentiable. 
	\end{enumerate}
\end{rem}
\begin{proof}
We will only give the sketch of the proof since it mimics step by step the proof of Theorem~2.2 in~\cite{HTT}, which proceeds into two steps. 
\begin{enumerate}
	\item First suppose that Assumption~\ref{ass:unique1} is satisfied, then the representation~\eqref{eq:Representation} holds. Indeed, Lemma~\ref{lem:Represent} applies and  following the recurrence arguments developed in~\cite{HTT} implies the representation~\eqref{eq:Representation}. 
	\item  For the clarity of the paper we recall here the arguments developed in~\cite{HTT} to extend, by smooth approximations, the representation proved at item 1. outside of Assumption~\ref{ass:unique1}. 
	Since according to assumptions~\ref{ass:unique2} and~\ref{ass:gFunc}, $(b,\sigma,g)$ are Lipschtiz  we can find a sequence of bounded smooth functions $(b^\epsilon, \sigma^\epsilon, g^\epsilon)$ converging
locally uniformly to $(b, \sigma, g)$ as $\epsilon \longrightarrow 0$ such that Assumption~\ref{ass:unique1} is verified when replacing $\mathcal{L}$ by $\mathcal{L}^\epsilon$ (the infinitesimal generator associated to $(b^\epsilon,\sigma^\epsilon)$) in the PDE~\eqref{eq:PDE1}. 
Let $X^\epsilon$ denote the solution of
$$
dX^{\epsilon}_t = b^\epsilon(t,X^{\epsilon}_t) dt + \sigma^\epsilon(t,X^{\epsilon}_t) d W_t
$$
and set 
$
u^\epsilon(t_0,x_0):=\E[g^\epsilon(X^\epsilon_T)]$. By item 1. The following representation holds 
$$
 u^\epsilon(t_0,x_0)= \psi^\epsilon:= \E[\frac{g^\epsilon(\bar X_{N_T+1}^\epsilon )}{1-F(\delta T_{N_T+1})}\prod_{k=2}^{N_T+1}P^{\epsilon}_k]\ ,
$$
 where $(\bar X^\epsilon_k)$ (and respectively the weights $(P^{\epsilon}_k)$) are given by~\eqref{eq:Xk} (resp. the recursion~\eqref{eq:weights}), where $X$ is replaced by $X^\epsilon$. 
By stability of SDEs, and dominated convergence theorem, 
$
u^\epsilon(t_0,x_0) \xrightarrow[\epsilon \rightarrow\, 0]{}u(t_0,x_0)\ .
$
Similarly one can prove that 
$
\psi^\epsilon \xrightarrow[\epsilon \rightarrow\, 0]{} \E[\frac{g(\bar X_{N_T+1} )}{1-F(\delta T_{N_T+1})}\prod_{k=2}^{N_T+1}P_k]\ ,
$
 which ends the proof. 
\end{enumerate}
\end{proof}
We next define a second representation that will be interesting in order to get some finite variance estimator for some given switching distribution, $f$. Following~\cite{HTT}, one can introduce antithetic variables to control the variance induced by the last time step. 
Let $ \mathcal{G}_{T}:=\sigma(N_T, T_k, \Delta W_k\mathbf{1}_{k \le N_T}, k \ge 1)$. 
Observe that 
$$
\E[M_{N_T+1}\,\vert \mathcal{G}_{T}]=\E[V_{N_T+1}\,\vert \mathcal{G}_T]=\E[P_{N_T+1}\,\vert \mathcal{G}_T]=0\ .
$$
Hence replacing $g(\bar X_{N_T+1})$ by $g(\bar X_{N_T+1})-g(\bar X_{N_T})$ in~\eqref{eq:Representation} does not change the expectation since due to the tower property:
$$
\E[\frac{ g(\bar X_{N_T})}{1-F(\delta T_{N_T+1})} \prod_{k=2}^{N_T+1}P_k]=\E[\frac{ g(\bar X_{N_T})}{1-F(\delta T_{N_T+1})} P_{N_T+1}\prod_{k=2}^{N_T}P_k]=0\ .
$$
Notice that the following decomposition holds whenever $N_T\geq 1$
$$
g(\bar X_{N_T+1})\prod_{k=2}^{N_T+1}P_k=
g(\bar X_{N_T+1})\frac{M_{N_T+1}} {f(\delta T_{N_T})} \prod_{k=2}^{N_T}P_k+\frac{1}{2}g(\bar X_{N_T+1})\frac{V_{N_T+1}} {f(\delta T_{N_T})}\prod_{k=2}^{N_T}P_k\ .
$$
Then using antithetic variables for the second term in the r.h.s. of the above equality yields the following estimator. 
\begin{prop}
\label{prop:Anti}
Under assumptions~\ref{ass:unique2},~\ref{ass:Bound} and~\ref{ass:gFunc}
, the following representation holds 
\begin{equation}
\label{eq:Anti}
u(t_0,x_0)=
\E[ \beta \prod_{k=2}^{N_T}P_k\mathbf{1}_{N_T\geq 1}]+\E[\frac{g(\bar X_1)}{1-F(\delta T_1)} \mathbf{1}_{N_T=0}]\ ,
\end{equation}
where $\beta:= \frac{1}{2}(\beta_1+\beta_2)$ with 
\begin{equation}
\label{eq:beta}
\left \{
\begin{array}{lll}
\beta_1&:=&  \frac{g(\bar X_{N_T+1})-g(\bar X_{N_T})}{1-F(\delta T_{N_T+1})} \frac{M_{N_T+1} + \frac{1}{2} V_{N_T+1}}{f(\delta T_{N_T})}, \\
\beta_2&:=&  \frac{g(\hat X_{N_T+1})-g(\bar X_{N_T})}{1-F(\delta T_{N_T+1})} \frac{-M_{N_T+1} + \frac{1}{2} V_{N_T+1}}{f(\delta T_{N_T})}
\end{array}
\right .
\end{equation}
and for any $n\in \mathbb{N}$, $\hat X_{n+1} =\bar X_{n}+b_{n}\delta T_{n+1}-\sigma_{n}\delta W_{n+1}$.
\end{prop}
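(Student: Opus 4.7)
The plan is to start from~\eqref{eq:Representation} in Proposition~\ref{prop:Representation} and split the expectation on the disjoint events $\{N_T=0\}$ and $\{N_T\geq 1\}$. On $\{N_T=0\}$ the empty product equals~$1$ by convention, so that contribution produces directly the second summand of~\eqref{eq:Anti}. All subsequent work concerns $\{N_T\geq 1\}$.

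On $\{N_T\geq 1\}$, the key structural observation is that conditionally on $\mathcal{G}_T=\sigma(N_T,T_k,\Delta W_k\mathbf{1}_{k\leq N_T}; k\geq 1)$, the gap $\delta T_{N_T+1}$ is $\mathcal{G}_T$-measurable while the next Brownian increment $\delta W_{N_T+1}$ is independent of $\mathcal{G}_T$ with law $N(0,\delta T_{N_T+1}\mathbb{I})$; this is because the stochastic mesh $\pi$ is built only from the $\tau_k$ sequence, which is independent of $W$. Since $M_{N_T+1}$ is linear in $\delta W_{N_T+1}$ and $V_{N_T+1}$ is built from the centred quadratic form $\delta W_{N_T+1}\delta W_{N_T+1}^\top -\delta T_{N_T+1}\mathbb{I}$, one gets $\E[M_{N_T+1}\mid\mathcal{G}_T]=\E[V_{N_T+1}\mid\mathcal{G}_T]=0$, and therefore $\E[P_{N_T+1}\mid\mathcal{G}_T]=0$. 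As $\bar X_{N_T}$, $\prod_{k=2}^{N_T}P_k$, $f(\delta T_{N_T})$ and $1-F(\delta T_{N_T+1})$ are all $\mathcal{G}_T$-measurable, the tower property yields
\begin{equation*}
\E\Bigl[\frac{g(\bar X_{N_T})}{1-F(\delta T_{N_T+1})}\prod_{k=2}^{N_T+1}P_k\,\mathbf{1}_{N_T\geq 1}\Bigr]=0,
\end{equation*}
so $g(\bar X_{N_T+1})$ may be replaced by $g(\bar X_{N_T+1})-g(\bar X_{N_T})$ inside the $\{N_T\geq 1\}$ part of~\eqref{eq:Representation} without changing the expectation. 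Factoring $P_{N_T+1}=[M_{N_T+1}+\tfrac{1}{2}V_{N_T+1}]/f(\delta T_{N_T})$ then exhibits the integrand $\beta_1\prod_{k=2}^{N_T}P_k$.

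To symmetrise $\beta_1$ into $\beta=\tfrac12(\beta_1+\beta_2)$, I would apply the antithetic transformation $\delta W_{N_T+1}\mapsto -\delta W_{N_T+1}$. By Gaussian symmetry this map preserves the conditional law of $\delta W_{N_T+1}$ given $\mathcal{G}_T$; it leaves invariant every $\mathcal{G}_T$-measurable quantity as well as $V_{N_T+1}$ (quadratic in the increment); and it sends $M_{N_T+1}\mapsto -M_{N_T+1}$ and $\bar X_{N_T+1}\mapsto \hat X_{N_T+1}$. Consequently $\E[\beta_1\prod_{k=2}^{N_T}P_k\mathbf{1}_{N_T\geq 1}]=\E[\beta_2\prod_{k=2}^{N_T}P_k\mathbf{1}_{N_T\geq 1}]$, and averaging the two identities produces the first summand of~\eqref{eq:Anti}. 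The only non-routine point is the clean justification of the conditional distribution of $\delta W_{N_T+1}$ given $\mathcal{G}_T$; this is the main technical obstacle, and it rests on the independence of $W$ from $(\tau_k)$ (so that conditioning on $\mathcal{G}_T$ freezes $\delta T_{N_T+1}$ without altering the law of the next increment). The integrability required to apply the tower property and the antithetic identity is provided by Assumptions~\ref{ass:unique2}, \ref{ass:Bound} and~\ref{ass:gFunc}.
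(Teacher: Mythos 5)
Your proof is correct and follows the paper's argument essentially verbatim: the paper derives this proposition in the text immediately preceding its statement, via the same chain of observations (defining $\mathcal{G}_T$, noting $\E[M_{N_T+1}\vert\mathcal{G}_T]=\E[V_{N_T+1}\vert\mathcal{G}_T]=\E[P_{N_T+1}\vert\mathcal{G}_T]=0$, using the tower property to subtract $g(\bar X_{N_T})$ at no cost, factoring out $P_{N_T+1}$, and symmetrising with the antithetic increment $-\delta W_{N_T+1}$). The only difference is that you spell out more carefully why the conditional law of $\delta W_{N_T+1}$ given $\mathcal{G}_T$ is $N(0,\delta T_{N_T+1}\mathbb{I})$ — using independence of $(\tau_k)$ from $W$ — a point the paper leaves implicit.
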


\section{Variance Analysis in the case of Gamma distribution}
The previous representation given by Proposition \ref{prop:Anti}  is general but the variance associated to the estimator is generally infinite as it is the case when $f$ is an exponential density.
From now on, we will suppose that the density $f=f_\Gamma^{\kappa,\theta}$ is the Gamma density~\eqref{eq:gammaDens} with parameters $(\kappa,\theta)$  with cumulative distribution $F= F_{\Gamma}^{\kappa,\theta}$.\\
First, we will introduce the following assumptions.
\begin{ass}
\label{ass:LH}
The following assertions hold
\begin{enumerate}
	\item $g$ is Lipschitz and $g\in C^{2}$.
	\item $\kappa ·\le \alpha \wedge\frac{1}{2}$. 
\end{enumerate}
\end{ass}
Now, we can state the following proposition. 
\begin{prop}
\label{prop:finVar}
Under Assumption~\ref{ass:unique2}, ~\ref{ass:Bound}
and~\ref{ass:LH}, the estimator defined by~\eqref{eq:Anti} in Proposition \ref{prop:Anti} has finite variance.  
\end{prop}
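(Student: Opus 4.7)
The plan is to control the second moment of each of the two summands in~\eqref{eq:Anti} separately. On $\{N_T=0\}$ one has $\delta T_1=T-t_0$, so $\bar X_1$ is Gaussian, $g$ is Lipschitz, and $(1-F(T-t_0))^{-1}$ is a bounded constant; standard Gaussian moments then give finiteness. The bulk of the argument is on $\{N_T\ge 1\}$, where one must bound the second moment of $\hat u_1 := \beta\prod_{k=2}^{N_T}P_k$. I would do this by backward induction in $k$ with inductive hypothesis
\begin{equation*}
\E\!\left[\beta^2\prod_{j=k}^{N_T}P_j^2 \bigm|\mathcal{F}_{T_{k-1}},N_T=n\right] \;\leq\; C^{n-k+2}\bigl(\|\delta b_{k-1}\|^2+\|\delta a_{k-1}\|^2+1\bigr),
\end{equation*}
for a constant $C$ independent of $n$ and $k$.

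The base case is the antithetic estimate at the terminal step. Using $\bar X_{N_T+1}-\hat X_{N_T+1}=2\sigma_{N_T}\delta W_{N_T+1}$ and a Taylor expansion of $g\in C^2$ around $\bar X_{N_T}$, one has
\begin{align*}
|g(\bar X_{N_T+1})-g(\hat X_{N_T+1})|&\le C\,\|\delta W_{N_T+1}\|,\\
|g(\bar X_{N_T+1})+g(\hat X_{N_T+1})-2g(\bar X_{N_T})|&\le C\,(\delta T_{N_T+1}+\|\delta W_{N_T+1}\|^2).
\end{align*}
The crucial point is that in the combination $\beta_1+\beta_2$ the linear $\delta W$ cancellation turns $(g_+ -g_-)M_{N_T+1}$ into a quantity scaling like $\|\delta b_{N_T}\|\,\|\delta W\|^2/\delta T$, and $(g_++g_--2g)V_{N_T+1}$ into one scaling like $\|\delta a_{N_T}\|\,(\|\delta W\|^4-(\delta T)^2)/(\delta T)^2$. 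Using $\E[\|\delta W\|^4|s]=O(s^2)$ and $\E[(\|\delta W\|^4-s^2)^2|s]=O(s^4)$, together with the boundedness of $f^{-2}$ and $(1-F)^{-2}$ on $(0,T]$, this yields $\E[\beta^2\mid\mathcal{F}_{T_{N_T}}]\le C(\|\delta b_{N_T}\|^2+\|\delta a_{N_T}\|^2+1)$.

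The inductive step uses the Hölder/Lipschitz bound $\|\delta b_k\|^2+\|\delta a_k\|^2\le C\bigl((\delta T_k)^{2\alpha}+\|\delta W_k\|^2\bigr)$ from Assumption~\ref{ass:unique2}, and the key one-step estimate
\begin{equation*}
\E\!\left[P_k^2\bigl((\delta T_k)^{2\alpha}+\|\delta W_k\|^2+1\bigr)\bigm|\mathcal{F}_{T_{k-1}}\right] \;\le\; C\bigl(\|\delta b_{k-1}\|^2+\|\delta a_{k-1}\|^2+1\bigr).
\end{equation*}
This is obtained by conditioning on $\delta T_k=s$ and explicitly computing the Gaussian moments of $\delta W_k$ (the cross term $\E[M_kV_k|s]$ vanishes by odd parity), leaving integrals of the form $\int_0^T \bigl(s^{2\alpha-1-\kappa}+s^{-\kappa}+s^{1-\kappa}\bigr)\,ds$ after substituting $f(s)^{-1}\le Cs^{1-\kappa}$. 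Both the leading $s^{2\alpha-1-\kappa}$ term (coming from pairing $V_k^2\sim s^{-2}$ with $s^{2\alpha}$) and the $s^{-\kappa}$ term (from pairing $V_k^2$ with $\|\delta W_k\|^2$) are integrable at $s=0$ precisely under Assumption~\ref{ass:LH}(ii). Summing over $n$ via $\sum_n C^{n+2}\,\PP[N_T=n]$, which is finite because a standard renewal estimate based on the positive mean of the Gamma law gives $\PP[N_T=n]$ an exponential tail, completes the proof.

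The main obstacle is the one-step estimate: the $V_k^2$ singularity of order $(\delta T_k)^{-2}$ is genuinely not square-integrable against the Gamma density on its own, and is only compensated by the $(\delta T_k)^{2\alpha}$ factor inherited from $\|\delta a_k\|^2$ in the next step's hypothesis. Making this matching consistent at each iteration, with a one-step constant $C$ independent of $k$, is exactly where Assumption~\ref{ass:LH}(ii) is sharp.
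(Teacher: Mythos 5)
Your overall decomposition (two terms of~\eqref{eq:Anti}, conditional second-moment bound on $\beta$, propagation over the $P_k$'s, summation over $n$ via the Gamma tail) matches the structure of the paper's proof, and your base-case bound on $\E[\beta^2\,\vert\,\bar{\mathcal F}_{N_T}]$ via the antithetic cancellation is essentially the paper's inequality~\eqref{eq:BoundBeta}. However, the central ``one-step estimate''
\begin{equation*}
\E\!\left[P_k^2\bigl((\delta T_k)^{2\alpha}+\|\delta W_k\|^2+1\bigr)\bigm|\mathcal F_{T_{k-1}}\right] \leq C\bigl(\|\delta b_{k-1}\|^2+\|\delta a_{k-1}\|^2+1\bigr)
\end{equation*}
is false, and there is no way to repair it in the form you propose. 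The factor $1/f(\delta T_{k-1})^2$ appearing in $P_k^2$ is $\mathcal F_{T_{k-1}}$-measurable and bounded; it contributes no compensating power of $\delta T_k$. What remains on the left is $\E\bigl[(M_k+\tfrac12 V_k)^2(\dots)\bigm|\mathcal F_{T_{k-1}}\bigr]$, which requires integrating $\delta T_k=s$ against the Gamma density $f(s)\sim s^{\kappa-1}$. Since $\E[V_k^2\,\vert\,\delta T_k=s]\sim \|\delta a_{k-1}\|^2 s^{-2}$, the constant term on your left-hand side produces $\int_0 s^{\kappa-3}\,ds=\infty$ for every $\kappa\le\tfrac12$; the $\|\delta W_k\|^2$ pairing gives $\int_0 s^{\kappa-2}\,ds$, which still diverges for $\kappa\le 1$; and even the most favorable $(\delta T_k)^{2\alpha}$ pairing gives $\int_0 s^{2\alpha+\kappa-3}\,ds$, which requires $2\alpha+\kappa>2$ — a condition that is \emph{not} implied by $\kappa\le\alpha\wedge\tfrac12$ unless $\alpha$ is close to $1$. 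Your displayed integrals $\int_0^T(s^{2\alpha-1-\kappa}+s^{-\kappa}+s^{1-\kappa})\,ds$ carry an extra factor $s^{1-\kappa}$ that would arise from writing $P_k=(M_k+\tfrac12 V_k)/f(\delta T_k)$, but the denominator is in fact $f(\delta T_{k-1})$ — the $1/f$ factor in $P_k$ and the $\delta W_k/\delta T_k$ factors involve \emph{different} time increments, so this extra power is not available at the $k$-th step as a function of $s=\delta T_k$.

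The paper sidesteps exactly this: its sigma-algebra $\bar{\mathcal F}_{k-1}$ is chosen to \emph{include} $T_k$, so $\delta T_k$ is never integrated out in a single step. The intermediate bound~\eqref{eq:cPk} keeps both $(\delta T_{k-1})^{2(1-\kappa)}$ (from the $1/f(\delta T_{k-1})^2$ of $P_k$) in the numerator and $(\delta T_k)^{-2((1-\alpha)\vee\frac12)}$ (from the Malliavin singularity of $P_k$) in the denominator; the latter is only cancelled at the \emph{next} backward step by the $(\delta T_k)^{2(1-\kappa)}$ coming from $P_{k+1}$'s $f$-denominator. This telescoping, carried out to~\eqref{eq:Revise}, produces a product of factors $(\delta T_k)^{2[(1-\kappa)-((1-\alpha)\vee\frac12)]}$, each of which is bounded by a constant precisely when $\kappa\le\alpha\wedge\tfrac12$. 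To salvage your induction you would have to drop the ``$+1$'' and, more importantly, condition on $T_k$ and carry the explicit $\delta T_k$-powers forward as the paper does rather than bounding them by a constant after each step — at which point you have reproduced the paper's argument.
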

\begin{proof}
Let $\bar{\mathcal{F}}_k$ denote the sigma-field generated by the Brownian  up to the random time $T_k$ and the random times up to the random time $T_{k+1}$ i.e. $\bar{\mathcal{F}}_k:=\sigma( T_1,..., T_{k+1}, (W_s)_{s \le T \wedge T_k})$.
Let us consider the second term on the r.h.s of~\eqref{eq:Anti}.  Notice that $\E[(g(\bar X_1))^2]$ can  easily be bounded by the boundness assumptions on $b$ and $\sigma$ and the Lipschitz property of $g$.  \\
Let us consider the first term on the r.h.s. of~\eqref{eq:Anti}. 
\begin{eqnarray}
\label{eq:V1}
\E[\Big ( \beta \mathbf{1}_{N_T\geq 1}\prod_{k=2}^{N_T}P_k\Big )^2]
&=&
\sum_{n=1}^\infty \E[\Big ( \beta \prod_{k=2}^{n}P_k\Big )^2\,\vert N_T=n]\mathbb{P}(N_T=n)\nonumber \\
\end{eqnarray} 
The proof will be decomposed into several steps. We will first try to bound the general term of the above series $\E[\Big ( \beta \prod_{k=2}^{n}P_k\Big )^2\,\vert N_T=n]$, then we will consider the sum. 
\begin{enumerate}
\item Bounding $\E[ \beta^2\vert \bar{\mathcal{F}}_n, N_T=n]$\\
 First considering $M_{k+1}$ and $V_{k+1}$ one easily obtains
\begin{eqnarray}
\label{eq:MVBound}
\E[M^4_{k+1}\,\vert \bar{\mathcal{F}}_k, N_T=n]
& \leq &  C\frac{(\delta b_k)^4}{(\delta T_{k+1})^2}
\ , \nonumber  \\
\quad \E[V^4_{k+1}\,\vert \bar{\mathcal{F}}_k,  N_T=n] &\leq& C\frac{(\delta a_k)^4}{(\delta T_{k+1})^4}\ .
\end{eqnarray}
Notice that in the sequel, $C$ will denote finite constants that may change from line to line that do not depend on $k$ or $n$ but only on the characteristics of the problem ($T$, the bounds or Lipschitz constants related to $g$, $b$, $\sigma$, $a$). 
Then  consider the general term of the sum \eqref{eq:V1}.
\begin{eqnarray*}
\E[ \beta^2\prod_{k=2}^{n}P^2_k\,\vert N_T=n]
&=&
\E \Big [\E[\beta^2\vert   \bar{\mathcal{F}}_n,  N_T=n] \Big( \prod_{k=2}^{n}P_k\Big )^2\,\vert N_T=n\Big ]\ .
\end{eqnarray*}
We get
\begin{align}
  \label{eq:decompBeta}
  \E[ \beta^2 &\vert  \bar{\mathcal{F}}_n,  N_T=n]   \nonumber \\
 &  \le \frac{C}{(1-F_{\Gamma}^{\kappa,\theta}(T))^2} \E[ (g(\bar X_{n+1})-g(\hat X_{n+1}))^2  \frac{M_{n+1} ^2}{f_{\Gamma}^{\kappa,\theta}(\delta T_n)^2} \vert  \bar{\mathcal{F}}_n, N_T=n] +  \\
&    \frac{C}{(1-F_{\Gamma}^{\kappa,\theta}(T))^2} \E[ (g(\bar X_{n+1})+g(\hat X_{n+1}) -2 g(\bar X_n))^2 \frac{V_{n+1}^2}{f_{\Gamma}^{\kappa,\theta}(\delta T_n)^2} \vert  \bar{\mathcal{F}}_n, N_T=n]  \nonumber
\end{align}
Consider the first term on the r.h.s. of inequality~\eqref{eq:decompBeta},
by the Lipschitz property of $g$, the boundness of  $b$,$ \sigma$ and using the fact that $\sigma$ is uniformly bounded away from zero, we obtain
\begin{align}
  \label{eq:firstRSH}
  \E[\vert g(\bar X_{n+1})-g(\hat X_{n+1})\vert^2   & \frac{M_{n+1} ^2}{(f_{\Gamma}^{\kappa,\theta}(\delta T_n))^2}  \vert \bar{\mathcal{F}}_n, N_T=n]
    \nonumber \\
 & \leq   C \frac{ \|\delta b_n\|^2}{(f_{\Gamma}^{\kappa,\theta}(\delta T_n))^2} \leq C
  \|\delta b_n\|^2 (\delta T_n)^{2(1-\kappa)}
\end{align}
Consider the second term of \eqref{eq:decompBeta}. By Assumption~\ref{ass:LH}.2 ($g\in C^{2}$) one can apply Ito and obtain
\begin{eqnarray*}
\vert g(\bar X_{n+1})+g(\bar X_{n}+b_{n}\delta T_{n+1}-\sigma_{n}\delta W_{n+1})-2g(\bar X_{n})\vert
&\leq &C \delta T_{n+1}\ .
\end{eqnarray*}
This implies still using the boundness of  $b$, $\sigma$ and using the fact that $\sigma$ is uniformly bounded away from zero :
\begin{align}
\label{eq:secondRSH}
\E[(g(\bar X_{n+1})+g(\hat X_{n+1})& -2 g(\bar X_n))^2   \frac{V_{n+1}^2}{f_{\Gamma}^{\kappa,\theta}(\delta T_n)^2} \vert  \bar{\mathcal{F}}_n, N_T=n ]
   \nonumber \\
& \leq  C(\delta T_{n+1})^2 \frac{\|\delta a_n\|^2}{\delta T_{n+1}^2 f_{\Gamma}^{\kappa,\theta}(\delta T_{n})^2 }  \le  C(\delta T_n)^{2(1-\kappa)} \|\delta a_n\|^2\ .
\end{align}
Injecting~\eqref{eq:firstRSH} and~\eqref{eq:secondRSH} into~\eqref{eq:decompBeta} finally yields 
\begin{equation}
\label{eq:BoundBeta}
\E[ \beta^2\,\vert   \bar{\mathcal{F}}_n, N_T=n]\leq C(\delta T_n)^{2(1-\kappa)}\Big (\|\delta b_n\|^2+\|\delta a_n\|^2\Big )\ .
\end{equation}
%
\item Bounding $\E[ C_k^2\,\vert   \bar{\mathcal{F}}_{k-1}, N_T=n]$, where the r. v. $C_k$ is defined by 
\begin{equation}
\label{eq:Ck}
C_{k}:=\|\delta b_{k}\|^2+\|\delta a_{k}\|^2\ .
\end{equation}
Consider the term $\Vert \delta b_k\Vert$, 
\begin{align*}
\E[ \|\delta b_k\|^4\,\vert   \bar{\mathcal{F}}_{k-1}, & N_T=n] \\
& = \E[\| b(T_k,\bar X_{k})-b(T_{k-1},\bar X_{{k-1}})\|^4\,\vert   \bar{\mathcal{F}}_{k-1}, N_T=n]  \\
 &  \leq \nonumber 8\E\big [\| b(T_k,\bar X_{k})-b(T_k,\bar X_{{k-1}})\|^4+   \\
&\quad \quad \quad  \| b(T_k,\bar X_{{k-1}})-b(T_{k-1},\bar X_{{k-1}})\|^4\,\vert \bar{\mathcal{F}}_{k-1},  N_T=n\big ]  \nonumber \\
&\leq  C ( 1 + (\delta T_k)^2)  (\delta T_k)^2 +  C (\delta T_k)^{4\alpha} \le  C (\delta T_k)^{4(\alpha\wedge \frac{1}{2})}
\end{align*}
using the fact that $b$ is Lipschitz  w.r.t. the space variable and $\alpha$-H\"older continuous w.r.t. the time variable. With the same development on $\delta a_k$ one finally gets 
\begin{equation}
\label{eq:ab}
\E[ C_k^2\,\vert  \bar{\mathcal{F}}_{k-1},  N_T=n]
\leq  C(\delta T_k)^{4(\alpha \wedge \frac{1}{2})}\ ,
\end{equation}
%
\item Bounding $\E[P^4_{k+1}\,\vert   \bar{\mathcal{F}}_k,  N_T=n]$\\
Using~\eqref{eq:MVBound}, we obtain 
\begin{align}
\label{eq:BoundPk}
\E[P^4_{k+1}\,\vert    \bar{\mathcal{F}}_k, &  N_T=n] \nonumber \\
&= 
\E[(M_{k+1}+\frac{1}{2}V_{k+1})^4 (\delta T_k)^{4(1-\kappa)} \theta^{4\kappa } \Gamma^4(\kappa) e^{4 \delta T_k /\theta} \Big \vert \bar{\mathcal{F}}_k, N_T=n]\nonumber \\
&\leq 
C\Big (\|\delta b_k\|^4+\frac{\|\delta a_k\|^4}{(\delta T_{k+1})^2}\Big )\frac{(\delta T_k)^{4(1-\kappa)}}{(\delta T_{k+1})^2} \nonumber\\
&\leq  
C C_k^2\frac{(\delta T_k)^{4(1-\kappa})}{(\delta T_{k+1})^4}\ ,
\end{align}
observing that $\delta T_{k+1}\leq T\leq C$ and recalling that $C_k$ is defined by~\eqref{eq:Ck}.  
Using the tower property of expectation and bound~\eqref{eq:BoundBeta} yields
\begin{align*}
\E[ \beta^2\prod_{k=1}^{n-1} & P^2_{k+1}\,\vert N_T=n] \\
&=
\E\Big [E[ \beta^2\,\vert  \bar{\mathcal{F}}_n, N_T=n]\prod_{k=1}^{n-1} P^2_{k+1}\,\vert N_T=n\Big ]\\
&\leq 
C\E\Big [(\delta T_n)^{2(1-\kappa)}\Big (\|\delta a_n\|^2+\|\delta b_n\|^2\Big )\prod_{k=1}^{n-1} P^2_{k+1}\,\vert N_T=n\Big ]\\
&\leq  
C\E\Big [\E[(\delta T_n)^{2(1-\kappa)}C_nP^2_n\,\vert  \bar{\mathcal{F}}_{n-1},  N_T=n]\prod_{k=1}^{n-2} P^2_{k+1}\,\vert N_T=n\Big ]\ .
\end{align*}

By Cauchy-Schwarz, for any $k$ and using~\eqref{eq:ab}, we have 
\begin{align}
\label{eq:cPk}
\E[C_{k}P^2_{k}\vert  \bar{\mathcal{F}}_{k-1},&  N_T=n]  \nonumber \\
&\leq 
\Big (\E[C^2_{k}\vert   \bar{\mathcal{F}}_{k-1},  N_T=n]\Big )^{1/2}\Big (\E[P^4_{k}\vert  \bar{\mathcal{F}}_{k-1}, N_T=n]\Big )^{1/2} \nonumber \\
&\leq 
C (\delta T_k)^{2(\alpha\wedge \frac{1}{2})} C_{k-1}\frac{(\delta T_{k-1})^{2(1-\kappa)}}{(\delta T_k)^2} \nonumber \\
&\leq 
C C_{k-1}\frac{(\delta T_{k-1})^{2(1-\kappa)}}{(\delta T_k)^{2((1-\alpha)\vee \frac{1}{2})}}\ .
\end{align}
Hence, we obtain by recursion
\begin{equation}
\label{eq:Revise}
\E[ \beta^2\prod_{k=1}^{n-1} P^2_{k+1}\,\vert N_T=n]
\leq  
C^{n+1}\E[(\delta T_n)^{2(1-\kappa)}\prod_{k=1}^{n-1}\frac{(\delta T_k)^{2(1-\kappa)}}{(\delta T_{k+1})^{2((1-\alpha)\vee \frac{1}{2})}}\,\vert N_T=n]\ ,
\end{equation}
observing that $\E[C_1\vert \bar{\mathcal{F}}_0,  N_T=n]\leq C(\delta T_1)^{2(\alpha\wedge \frac{1}{2})}\leq C T$.

Then recalling that $\kappa\leq \alpha\wedge \frac{1}{2}$ implies $(\delta T_k)^{2((\frac{1}{2}-\kappa)\wedge (\alpha-\kappa))}\leq T^{2((\frac{1}{2}-\kappa)\wedge (\alpha-\kappa))}$ finally yields  
\begin{equation}
\label{eq:prodBpound}
\E[ \beta^2\prod_{k=1}^{n-1} P^2_{k+1}\,\vert N_T=n]
\leq
CC^{n}\ .
\end{equation}
%
%
\item Convergence of the sum $\sum_{n=1}^\infty C^n\mathbb{P}(N_T=n)$.
Let us introduce
$
S_n=\sum_{k=1}^n\tau_k\ ,
$
notice that 
$S_n\sim \Gamma(n\kappa , \theta)$ with cumulative distribution 
$$
F_{S_{n}}(s)=\int_0^s \frac{r^{n\kappa-1}e^{-r/\theta}}{\Gamma(n\kappa)\theta^{n\kappa}} dr\ .
$$
Hence one can bound $\mathbb{P}(N_T=n)$ as follows 
\begin{eqnarray*}
\mathbb{P}(N_T=n)&\leq &\mathbb{P}(N_T=n)\\
& \leq & \mathbb{P}(S_n\leq T) \\
&\le & \int_0^T \frac{r^{n\kappa-1}}{\Gamma(n\kappa)\theta^{n\kappa}} dr =  \frac{T^{n\kappa}}{ n\kappa \Gamma(n\kappa)\theta^{n\kappa}}
\end{eqnarray*}
This implies that 
$$
\sum_{n=1}^\infty C^n\mathbb{P}(N_T=n)
\leq 
 \sum_{n=1}^\infty \frac{\hat C ^n}{n \kappa \Gamma (n \kappa)}\ ,
 $$
 with $\hat C = C \frac{T}{\theta}$.
Using the generalization of the Stirling formula  $\Gamma (z)\sim z^{z-1/2} e^{-z} \sqrt{2\pi}$ one proves that
$\frac{\hat C^n}{ n \kappa \Gamma (n \kappa)} \sim  \frac{ \hat{C}^{\frac{1}{2 \kappa}}  e^{-\frac{1}{2}}} {\sqrt{2 \pi}} ( \frac{\hat{C}^{\frac{1}{\kappa}} e}{n\kappa} )^{n \kappa +\frac{1}{2}}$ which 
is the general term of a convergent sum. 
\end{enumerate}
\end{proof}
\begin{rem}
  The convergence of the series~\eqref{eq:V1} relies on  two facts :
  \begin{itemize}
  \item The general term of the series~\eqref{eq:V1} has to be finite:
	$
	\E[\Big ( \beta \prod_{k=2}^{n}P_k\Big )^2\,\vert N_T=n]<\infty \ ,
	$
	for any fixed number of switching times $N_T=n$. However, one can observe  that our bound on the r.h.s. of~\eqref{eq:Revise} can possibly blow up to infinity when $\kappa> \alpha\wedge 1/2$. In particular, in the case of an exponential density, corresponding to $\kappa=1$, it is well-known that the conditional distribution, $\mathcal{L}(\delta T_k\vert N_T=n)$,  is the uniform distribution on $[0,T]$, hence the expectation on the r.h.s. of~\eqref{eq:Revise} is infinite. 
	When $\kappa\leq \alpha\wedge 1/2$,   we observe that our bound is finite whatever the conditional  distribution, $\mathcal{L}(\delta T_k\vert N_T=n)$. Notice that using gamma switching times  increases the occurrence of small jumps w.r.t. the exponential case and hence the occurence of high numbers of time steps is also increased. To better adjust the complexity and  variance tradeoff, one could consider other switching times densities with a smaller intensity of small jumps and rely on the conditional law $\mathcal{L}(\delta T_k\vert N_T=n)$ to ensure that the expectation on the r.h.s. of~\eqref{eq:Revise} is bounded.
    \item The sum $\sum_{k=1}^\infty C^n\mathbb{P}(N_T=n)$ has to converge. By increasing the intensity of small jumps as explained at point 1., we expect that $\mathbb{P}(N_T=n)$ will decrease more slowly with $n$. This results in a tradeoff one has to achieve: increasing small jumps intensity to be able to bound each term of the series but not too strongly to ensure the convergence of $\sum_{k=1}^\infty C^n\mathbb{P}(N_T=n)$. 
    \end{itemize}
\end{rem}
Consequently, the representation~\eqref{eq:Anti} provides a Monte Carlo approach to compute $\E[g(X_T)]$, by simulating the regime switching process~\eqref{eq:Xk} instead of the SDE~\eqref{eq:sde} which would potentially require to implement a stochastic Euler discretization scheme. However, even though our estimator is proved to have finite variance, one can observe in practice huge variances due to the product of a random number of terms $P_k$ that could potentially take values greater that one. This expectation of products is \textit{by nature} not a good candidate for Monte Carlo estimation.  Hence, we propose to use a resampling procedure to change this expectation of products in a product of expectations which is known to be much more stable for estimation.

\section{Resampling method for regime switching processes}
In this section, we propose to introduce  an interacting particle system (in the same vein as those thoroughly discussed in the reference books~\cite{DelMoral} and~\cite{DelMoral2}) to approximate $u(t_0,x_0)$. We will prove that the resulting estimator has finite variance under the same assumptions required to bound the variance of estimator \eqref{eq:Anti}. However, in practice, the new estimator relying on interacting particle systems will show better performances providing smaller variances in many examples, as illustrated in Section~\ref{sec:simu}. 

\subsection{A Feynman-Kac measure representation}
First we have to express $u(t_0,x_0)$ as an integral according to a Feynman-Kac measure. 
Let us consider the Markov chain consisting of the sequence of random variables $\check X_k:=(T_k,\bar X_k)$, where $(T_k)$ and $(\bar X_k)$ are given respectively by the dynamics~\eqref{eq:tau} and~\eqref{eq:Xk}. In the sequel, we note  $\check X_{0:k}:=(\check X_0,\cdots ,\check X_{k})$ the path valued Markov chain.
Let us introduce, for any integer $k\geq 0$, the real valued function $\check G_{k}$ depending on the path   $\check x_{0:k}\in E_{k}:=(\R_+\times \R^d)^{k+1}$ with the notations $\check x_{0:k}:=(\check x_0,\cdots ,\check x_{k})$ and  $\check x_{p}:=(t_p,x_p)\in \R_+\times  \R^d$ such that 
\begin{equation}
\label{eq:tildeG}
\check G_{k}(\check x_{0:k})
:=
\left \{
\begin{array}{ll}
1 & \textrm{if}\ k=0\ \textrm{or}\ k=1 \\
\frac{\check M_{k}(\check x_{0:k})+
\frac{1}{2}\check V_{k}(\check x_{0:k})}{f_{\Gamma}^{\kappa,\theta}(\delta t_{k-1})}
& \textrm{if}\ k\geq 2 \ \textrm{and}\ \delta t_{k-1}\delta t_{k}>0 \\
1 &\textrm{elsewhere}
\ .\end{array}
\right .
\end{equation}
with $\delta t_{k+1}:=t_{k+1}-t_k$ and where the real valued functions $\check M_{k+1}\,,\ \check V_{k+1}$ and $\check{\delta W}_{k+1}$ are such that for any $\check x_{0:k+1}\in  E_{k+1}$
\begin{eqnarray}
\label{eq:tildeMV}
\check M_{k+1}(\check x_{0:k+1})&:=& 
\left\{
\begin{array}{l}
(b(t_k,x_k)-b(t_{k-1},x_{k-1}) ). (\sigma(t_k,x_k)^{-1})^\top \frac{\check{\delta W}_{k+1}(\check x_{0:k+1})}{\delta t_{k+1}}\quad \textrm{if}\ \delta t_{k+1}>0\\
1\quad \textrm{elsewhere}
\end{array}
\right .
\nonumber \\
\check V_{k+1}(\check x_{0:k+1})&:=&  
\left \{
\begin{array}{l}
(a(t_k,x_k)-a(t_{k-1},x_{k-1})) : \frac{B_{k+1}(\check x_{0:k+1})}{(\delta t_{k+1})^2} \quad \textrm{if}\ \delta t_{k+1}>0\\
1\quad \textrm{elsewhere}\ ,
\end{array}
\right . 
\end{eqnarray}
with 
\begin{eqnarray*}
B_{k+1}(\check x_{0:k+1}) &:=& (\sigma(t_k,x_k)^{-1})^\top \Big (\check{\delta W}_{k+1}(\check x_{0:k+1}) \check{\delta W}_{k+1}(\check x_{0:k+1})^\top -\delta t_{k+1} \mathbb{I}\Big ) \sigma(t_k,x_k)^{-1} \nonumber 
\\
\check {\delta W}_{k+1}(\check x_{0:k+1})&:=& \sigma(t_k,x_k)^{-1} (x_{k+1}-x_k-b(t_k,x_k)\delta t_{k+1})\ .
\end{eqnarray*}
Observe that $\check G_{k+1}$ does not really depend on the whole path $\check x_{0:k+1}$, but only on $(\check x_{k-1},\check x_k,\check x_{k+1})$, for $k>0$. Recalling~\eqref{eq:weights}, notice that the following identity holds 
$$
\check G_{k}(\check X_{0:k})=P_{k}\ ,\quad \mathbb{P}\ a.s.\quad \textrm{for all}\quad k=2,\cdots ,N_T\ .
$$
In the sequel, it will appear to be crucial to consider positive \textit{potential functions} with uniformly bounded conditional variances, more specifically such that $\sup_{\check x_{0:k}\in E_k}\E[G^2_{k+1}(\check X_{0:k+1})\,\vert \check X_{0:k}=\check x_{0:k}]<\infty $, thus we define the potential functions $(G_{k})_{k\geq 0}$ (depending implicitly on $T$) such that for any $k\geq 0$ and for any $\check x_{0:k}\in E_{k}$, 
\begin{equation}
\label{eq:G}
G_{k}(\check x_{0:k}):= 
\left \{
\begin{array}{ll}
1&\textrm{if}\ k=0\\
\vert \check G_{1}(\check x_{0:1})\vert (\delta t_{1})^{1-\kappa}\sqrt{c_1(\check x_{0:1})}&
\textrm{if}\ k=1,\  \delta t_1>0\\
\vert \check G_{k}(\check x_{0:k})\vert \sqrt{\frac{c_k(\check x_{0:k})}{c_{k-1}(\check x_{0:k-1})}}\Big (\frac{\delta t_{k}}{\delta t_{k-1}}\Big )^{1-\kappa}&\textrm{if}\ k\geq 2\ ,\ \delta t_{k-1}\delta t_{k}>0\ ,\\
1 &\textrm{elsewhere}\ ,
\end{array}
\right .
\end{equation}
where the real valued function $c_k$ is defined on $E_k$, for $k\geq 1$, by 
\begin{equation}
\label{eq:ckk}
c_k(\check x_{0:k})=
\vert \delta t_k\vert +\Vert b(t_k,x_k)-b(t_{k-1},x_{k-1})\Vert^2+\Vert a(t_k,x_k)-a(t_{k-1},x_{k-1})\Vert^2
\end{equation}
Notice that this definition of $c_k$ is such that $c_k(\check X_{0:k})=C_k+\delta T_k$ where $C_k$ was defined in~\eqref{eq:Ck}, hence
$$
G^2_{k}(\check X_{0:k})=\frac{C_k+\delta T_k}{C_{k-1}+\delta T_{k-1}} \Big (\frac{\delta T_{k}}{\delta T_{k-1}}\Big )^{2(1-\kappa)}P^2_{k}\ ,\quad \mathbb{P}\ a.s.\quad \textrm{for all}\quad k=2,\cdots ,N_T\ .
$$
Then observe that one can prove an inequality similar as~\eqref{eq:cPk} with $C_k$ replaced by $c_k(\check X_{0:k})$
\begin{align}
\label{eq:cPkbis}
\E[c_{k}(\check X_{0:k})P^2_{k} & \vert   \bar{\mathcal{F}}_{k-1}, N_T=n] \nonumber \\
 & \leq 
\Big (\E[c^2_{k}(\check X_{0:k})\vert  \bar{\mathcal{F}}_{k-1},  N_T=n]\Big )^{1/2}\Big (\E[P^4_{k}\vert  \bar{\mathcal{F}}_{k-1},  N_T=n]\Big )^{1/2} \nonumber \\
&\leq 
C c_{k-1}(\check X_{0:k-1})\frac{(\delta T_{k-1})^{2(1-\kappa)}}{(\delta T_{k})^{2((1-\alpha)\vee \frac{1}{2})}}\ ,
\end{align}
which yields as announced, that for any  $\kappa \leq \alpha\wedge \frac{1}{2}$ and  $\check x_{0:k-1}\in  E_{k-1}$ 
\begin{equation}
\label{eq:BoundGk}
\E[G^2_{k}(\check X_{0:k})\,\vert \check X_{0:k-1}=\check x_{0:k-1}]\leq C<\infty \ .
\end{equation}

Notice that     
 $
\prod_{k=2}^{N_T}P_k=H_{N_T+1}(\check X_{0:N_T+1}) \prod_{k=0}^{N_T}G_k(\check X_{0:k})S_k(\check X_{0:k})\ ,\quad \mathbb{P}\ a.s.
$   
 where for any $k\geq 0$ and for any $\check x_{0:k}\in E_{k}$, 
\begin{equation}
\label{eq:S}
S_{k}(\check x_{0:k}):=Sign(\check G_{k}(\check x_{0:k}))\ ,
\end{equation}
and
\begin{equation}
\label{eq:HH}
H_{k+1}(\check x_{0:k+1}):=
\left \{
\begin{array}{ll}
\frac{1}{(\delta t_{k})^{1-\kappa}\sqrt{c_{k}(\check x_{0:k})}} &\textrm{if}\ k\geq 1\ \textrm{and}\ \delta t_k>0 \\
1& \textrm{elsewhere}\ .
\end{array}
\right . 
\end{equation}

Let us  introduce $\beta_{n+1}:= \frac{1}{2}\beta_{1,n+1} +\frac{1}{2}\beta_{2,n+1}$ defined on $E_{n+1}$ such that 
$\beta_{1,1}(\check x_{0:1})= \beta_{2,1}(x_{0:n+1}) = \frac{1}{(1- F_{\Gamma}^{\kappa,\theta}(\delta t_1))} g(x_1)$  and 
for any $n\geq 1$ 
\begin{equation}
\label{eq:betaFunc}
\left \{
\begin{array}{lll}
\beta_{1,n+1}(\check x_{0:n+1})&:=&  \frac{g(x_{n+1})-g(x_{n})} {1- F_{\Gamma}^{\kappa,\theta}(\delta t_{n+1})} \frac{ M_{n+1}(\check x_{0:n+1}) + \frac{1}{2} \check V_{n+1}(\check x_{0:n+1})} {f_{\Gamma}^{\kappa,\theta}(\delta t_{n})}\\
\beta_{2,n+1}(\check x_{0:n+1})&:=&  \frac{g(\hat x_{n+1})-g(x_{n})}{1- F_{\Gamma}^{\kappa,\theta}(\delta t_{n+1}) }  \frac{ -M_{n+1}(\check x_{0:n+1}) + \frac{1}{2} \check V_{n+1}(\check x_{0:n+1})}{f_{\Gamma}^{\kappa,\theta}(\delta t_{n})} \ ,
\end{array}
\right .
\end{equation}
with $\hat x_{n+1} = x_{n}+b(t_n,x_n) \delta t_{n+1}-\sigma(t_n,x_n) \check {\delta W}_{n+1}(\check x_{0:n+1})$.\\
Recalling~\eqref{eq:Anti}, observe that 
\begin{eqnarray}
\label{eq:uG}
u(t_0,x_0)&=&
\E[\beta \prod_{k=2}^{N_T}P_k\mathbf{1}_{N_T\geq 1}]+\E[\frac{g(\bar X_1)}{1- F_{\Gamma}^{\kappa,\theta}(\delta T_1)} \mathbf{1}_{N_T=0}]\nonumber\\
&=&
\E[ (\beta_{N_T+1}H_{N_T+1})(\check X_{0:N_T+1})(S_{0:N_T}G_{0:N_T})(\check X_{0:N_T})]
\ ,
\end{eqnarray}
%
where to simplify the notation  $G_{p:q}$ (resp. $S_{p:q}$) denotes the product $\prod_{k=p}^q G_k$ (resp. $\prod_{k=p}^q S_k$), with in particular $G_{p,q}= \mathbf{1}$ when $p>q$, where $\mathbf{1}$ denotes the function which takes the unique value $1$.
Now, we can define the sequence of non negative measures $(\gamma_k)_{k\geq 0}$ such that for any real valued bounded test function $\varphi$ defined on $E_n:=(\R_{+}\times \R^d)^{n+1}$, we have 
\begin{equation}
\label{eq:gamma}
\gamma_k(\varphi):=\E[\varphi (\check X_{0:k})\prod_{p=0}^{k-1} G_p(\check X_{0:p})]=\E[\varphi (\check X_{0:k})G_{0:k-1}(\check X_{0:k-1})]\ ,\quad \textrm{for}\ k\geq 1\ .
\end{equation}
We set by convention  $\gamma_0:=\mu_0$ where $\mu_0$ denotes the probability distribution, $\mathcal{L}(\check X_0)$, of the initial condition $\check X_0=(t_0,x_0)$ i.e. $\mu_0:=\mathcal{L}(\check X_0)=\delta_{(t_0,x_0)}$.
Gathering~\eqref{eq:uG} together with the above definition one readily obtains the following proposition expressing $u(t_0,x_0)$  as an integral w.r.t. the non-negative measures $\gamma_n$.
   \begin{rem}
     The weights used  in equation \eqref{eq:G} can be generalized with $\rho \in [\frac{1}{2},1-\kappa]$ as 
     \begin{equation}
       \label{eq:Gbis}
G_{k}(\check x_{0:k}):= 
       \left \{
       \begin{array}{ll}
         1&\textrm{if}\ k=0\\
         \vert \check G_{1}(\check x_{0:1})\vert (\delta t_{1})^{\rho}\sqrt{c_1(\check x_{0:1})}&
				\textrm{if}\ k=1,\  \delta t_1>0\\
         \vert \check G_{k}(\check x_{0:k})\vert \sqrt{\frac{c_k(\check x_{0:k})}{c_{k-1}(\check x_{0:k-1})}}\Big (\frac{\delta t_{k}}{\delta t_{k-1}}\Big )^{\rho}&\textrm{if}\ k\geq 2\ ,\ \delta t_{k-1}\delta t_{k}>0\ ,\\
         1 &\textrm{elsewhere}\ ,
       \end{array}
       \right .
     \end{equation}
   \end{rem}
\begin{prop}
\label{prop:ugamma}
Under Assumptions~\ref{ass:unique2} and~\ref{ass:Bound},
the following identity holds for any $n\geq 1$
\begin{equation}
\label{eq:ugamma}
u(t_0,x_0)=\gamma_n(\varphi_n)\ ,
\end{equation}
where $(\varphi_n)_{n\geq 1}$ is a sequence of real valued functions such that for any $n\geq 1$ and $\check x_{0:n}\in E_n:=(\R_+\times \R^d)^{n+1}$ 
\begin{equation}
\label{eq:varphi}
\varphi_n(\check x_{0:n}):=\E[(\beta_{N_T+1}H_{N_T+1})(\check X_{0:N_T+1})(S_{1:N_T} G_{n:N_T})(\check X_{0:N_T})\,\vert \check X_{0:n}=\check x_{0:n}]\ .
\end{equation}
\end{prop}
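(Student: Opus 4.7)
The plan is to rewrite the representation~\eqref{eq:uG} as $\gamma_n(\varphi_n)$ via a single application of the tower property, conditioning on $\sigma(\check X_{0:n})$. First, I will record two immediate simplifications that declutter the formulas: since $\check G_0\equiv 1$ by~\eqref{eq:tildeG} we have $S_0(\check X_0)=\mathrm{sign}(\check G_0(\check X_0))=1$, hence $S_{0:N_T}=S_{1:N_T}$; and since $G_0\equiv 1$ by~\eqref{eq:G}, the $G$-product in~\eqref{eq:gamma} can be regarded as starting from $k=1$.

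Starting from the definitions~\eqref{eq:gamma} of $\gamma_n$ and~\eqref{eq:varphi} of $\varphi_n$, I will pull the $\sigma(\check X_{0:n})$-measurable factor $G_{0:n-1}(\check X_{0:n-1})$ inside the conditional expectation that defines $\varphi_n$, so that by the tower property
\begin{equation*}
\gamma_n(\varphi_n)=\E\bigl[(\beta_{N_T+1}H_{N_T+1})(\check X_{0:N_T+1})\,S_{1:N_T}(\check X_{0:N_T})\,G_{0:n-1}(\check X_{0:n-1})\,G_{n:N_T}(\check X_{0:N_T})\bigr].
\end{equation*}
The remaining task is to establish the pathwise product identity
\begin{equation*}
G_{0:n-1}(\check X_{0:n-1})\cdot G_{n:N_T}(\check X_{0:N_T})=G_{0:N_T}(\check X_{0:N_T})\quad \mathbb{P}\text{-a.s.},
\end{equation*}
after which a direct comparison with~\eqref{eq:uG}, together with $S_{0:N_T}=S_{1:N_T}$, yields $\gamma_n(\varphi_n)=u(t_0,x_0)$.

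On $\{N_T\geq n\}$ the product identity is a plain index split of $\{0,\ldots,N_T\}$, and on $\{N_T=n-1\}$ it reduces to $G_{0:n-1}\cdot\mathbf{1}=G_{0:N_T}$ by the empty-product convention $G_{p:q}=\mathbf{1}$ when $p>q$. The delicate case is $\{N_T<n-1\}$, on which $G_{n:N_T}=\mathbf{1}$ while $G_{0:n-1}$ still contains the tail $G_{N_T+1:n-1}$. Here I will invoke the fact that the Markov chain $(T_k)$ is absorbed at $T$ past $N_T+1$, so $\delta T_k=0$ and consequently $\check{\delta W}_k=0$ for $k\geq N_T+2$; this triggers the ``elsewhere'' branch in~\eqref{eq:G} and forces $G_k=1$ for every $k\geq N_T+2$. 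The one nontrivial residual factor is therefore $G_{N_T+1}$, and controlling its contribution---by conditioning on the sigma-field generated by the path up to time $T_{N_T+1}$ and using the Gaussian structure of the Malliavin weights entering $\check G_{N_T+1}$---is the technical step I expect to be the main obstacle. Once this residual contribution is shown not to alter the expectation against $(\beta_{N_T+1}H_{N_T+1})\,S_{1:N_T}\,G_{0:N_T}\,\mathbf{1}_{\{N_T<n-1\}}$, the one-line tower property argument sketched above concludes.
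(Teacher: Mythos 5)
Your skeleton is exactly the right one: pull the $\sigma(\check X_{0:n})$-measurable factor $G_{0:n-1}$ into the conditional expectation defining $\varphi_n$, apply the tower property, and match products of potential functions against~\eqref{eq:uG}. The case analysis for $N_T\geq n-1$ is correct, as is the observation that absorption at $T$ forces $G_k=1$ for every $k\geq N_T+2$ on $\{N_T<n-1\}$ via the ``elsewhere'' branch of~\eqref{eq:G}. You have also correctly isolated $G_{N_T+1}$ as the sole residual factor, which is the crux of the matter.

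The gap is in the final step. The resolution you sketch---conditioning on the sigma-field of the path up to $T_{N_T+1}$ and averaging out the Gaussian structure---cannot work. Both $G_{N_T+1}(\check X_{0:N_T+1})$ and $(\beta_{N_T+1}H_{N_T+1})(\check X_{0:N_T+1})$ are deterministic functions of the same finite path segment $\check X_{0:N_T+1}$, so conditioning on the sigma-field generated by that segment is vacuous. Conditioning one step earlier, on $\check X_{0:N_T}$, does not decouple them either: $G_{N_T+1}$ contains $|\check M_{N_T+1}+\tfrac12\check V_{N_T+1}|$ while $\beta_{N_T+1}$ contains the same Malliavin weights $M_{N_T+1}$, $V_{N_T+1}$, and their joint conditional law is not a product. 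No distributional averaging can make the residual vanish from the expectation while leaving the remaining integrand unchanged. What the argument actually needs is a \emph{pathwise} cancellation, namely $G_{N_T+1}\equiv 1$ on $\{N_T<n-1\}$. Under a strictly literal reading of~\eqref{eq:G} this fails: at $k=N_T+1$ the test $\delta t_{k-1}\delta t_k>0$ passes because $\delta T_{N_T}>0$ and $\delta T_{N_T+1}=T-T_{N_T}>0$, which triggers the nontrivial branch. The identity~\eqref{eq:ugamma} only goes through if the ``elsewhere'' clause is understood to fire already whenever $t_k=T$, so that the potential function is trivial for all $k\geq N_T+1$, not merely for $k\geq N_T+2$. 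Under that reading the index split you wrote down \emph{is} the whole proof, and the conditioning step should simply be deleted; under the literal reading the residual does not cancel and the product-split identity has a genuine hole.
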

\begin{rem}
\label{rem:varphin}
Observe that for a given $n\geq 1$, $\varphi_n$ is defined by~\eqref{eq:varphi} as a conditional expectation of a terminal payoff delivered at a future random time $N_T+1$, knowing the state of the Markov chain from time $0$ to $n$. Hence, evaluating $\varphi_n(\check x_{0:n})$ is  not trivial, for a given $\check x_{0:n}$, since it requires to compute a conditional expectation. However whenever $\check x_n=(t_n,x_n)$ is such that $t_n\geq T$, then the knowledge of $\check X_{0:n}=\check x_{0:n}$ determines completely both $N_T=q<n$ and $\check X_{0:N_T+1}$, which implies
\begin{eqnarray*}
\varphi_n(\check x_{0:n})&:=&\E[(\beta_{N_T+1}H_{N_T+1})(\check X_{0:N_T+1})(S_{1:N_T}G_{n:N_T})(\check X_{0:N_T})\,\vert \check X_{0:n}=\check x_{0:n}]\\
&=&(\beta_{q+1}H_{q+1})(\check x_{0:q+1}) S_{1:q}(\check x_{0:q})\ .
\end{eqnarray*}
%
\end{rem}
Now, let us introduce the sequence of probability measures $(\eta_k)$ defined by normalization of $(\gamma_k)_{k\geq 1}$ 
\begin{equation}
\label{eq:nu}
\eta_k(\varphi):=\frac{\gamma_k(\varphi)}{\gamma_k(\mathbf{1})}=\frac{\E[\varphi (\check X_{0:k}) G_{0:k-1}(\check X_{0:k-1})]}{\E[ G_{0:k-1}(\check X_{0:k-1})]}\ ,\quad \textrm{for any}\ k\geq 0\ ,
\end{equation}
where $\mathbf{1}$ denotes the function which takes the unique value $1$.
Observing that for $k\geq 1$, $\gamma_k(\mathbf{1}) =\gamma_{k-1}(G_{k-1})$,  we obtain by recurrence
\begin{eqnarray}
\label{eq:prod}
\gamma_k(\varphi)&=&\eta_k(\varphi)\gamma_k(\mathbf{1})\nonumber \\
&=&\eta_k(\varphi)\gamma_{k-1}(G_{k-1})\nonumber \\
&=&\eta_k(\varphi)\eta_{k-1}(G_{k-1})\cdots \eta_0(G_0)\ .
\end{eqnarray}
As announced, we have replaced the expectation of a product of functions by the product of expectations of functions, since for any $n\geq 1$   
$$
u(t_0,x_0)=\gamma_n(\varphi_n) =\E[\varphi_n (\check X_{0:n})]=\eta_n(\varphi _n)\eta_{n-1}(G_{n-1})\cdots \eta_0(G_0)\ .
$$

Our objective is now to approximate the sequence of probability measures $(\eta_k)_{k\geq 0}$ by a sequence of empirical measures $(\eta^N_k)_{k\geq 0}$ based on a system of $N$ particles to finally end up with an approximation of the type 
$$
u(t_0,x_0)\approx \eta^N_n(\varphi_n)\eta^N_{n-1}(G_{n-1})\cdots \eta^N_0(G_0)\ .
$$

\subsection{The particle approximation scheme}
The sequence of approximating measures $(\eta_k^N)_{k\geq 0}$ will be defined by mimicking the dynamics of $(\eta_{k})_{k\geq 0}$. Hence, we begin by describing this recursive dynamics. \\
First let $K_k$ denote the transition kernel of the path valued Markov chain $(X'_k:=\check X_{0:k})$ from $k-1$ to $k$ for any integer $k\geq 1$. Recall that $K_k$ can be considered both as an integral operator on the space of measurable functions defined on $E_k$ and on the space of finite measures, $\mathcal{M}(E_{k-1})$, such that 
\begin{itemize}
	\item for any measurable test function  $f_k$ defined on $E_k$, $K_k(f_k)$ is a measurable function defined on $E_{k-1}$ such that for any $x'_{k-1}\in E_{k-1}$
$$
K_k(f_k)(x'_{k-1})=\E[f_k(X'_k)\,\vert X'_{k-1}= x'_{k-1}]=\int_{y'_{k}\in E_k} K_k(x'_{k-1},dy'_{k}) f_k(y'_{k})\ ,
$$
	\item for any finite measure $m_{k-1}$ on $E_{k-1}$, $m_{k-1}K_k$ is a finite measure on $E_k$ such that for any $x'_k\in E_k$  
	$$
	(m_{k-1} K_k)(dx'_{k})=\int_{y'_{k-1}\in E_{k-1}} m_{k-1}(dy'_{k-1})K_k(y'_{k-1},dx'_{k})\ .
	$$
	In particular, let $\mu_k$ denote the probability law underlying the random variable $X'_k:=\check X_{0:k}$ (we will often write $\mu_k=\mathcal{L}(X'_k)$), for any $k\geq 0$. Then  observe that $\mu_{k}K_{k+1}=\mu_{k+1}$ the probability law  of $X'_{k+1}:=\check X_{0:k+1}$.  Besides, notice that if $\check K_k$ denotes the transition kernel of the Markov chain $(\check X_{k})$ from $k-1$ to $k$, then  the transition kernel $K_k$ is obtained as the following cartesian product, for any $(y'_{k-1},x'_k):=(y_{0:k-1},dx_{0:k})\in E_{k-1}\times E_k$
	$$
	K_k(y'_{k-1},dx'_{k})=K_k(y_{0:k-1},dx_{0:k})=\delta_{y_{0:k-1}}(dx_{0:k-1})\times \check K_k(y_{k-1},dx_k)\ .
	$$
\end{itemize}
			Now we can describe the dynamics of $(\eta_k)_{k\geq 0}$ with $k$. 
For any real valued test function $f_k$ defined on $E_k$, the following identities holds 
\begin{eqnarray*}
\eta_{k}(f_k)&:=&\frac{\gamma_k(f_k)}{\gamma_k(\mathbf{1})}\\ \\
&=&\frac{\mu_{k}(f_k G_{1:k-1})}{\mu_{k}(G_{1:k-1})} \ ,\quad \textrm{where}\ \mu_k:=\mathcal{L}( X'_{k})=\mathcal{L}(\check X_{0:k})\ ,\textrm{and}\ G_{1:k}:=\prod_{p=1}^k G_p\\  \\
&=& \frac{ \mu_{k-1} (K_k(f_k) G_{1:k-1})}{\mu_{k-1}(G_{1:k-1})}\quad\textrm{by the tower property of conditional expectation} \\ \\
&=&\frac{\gamma_{k-1}(K_k(f_k)G_{k-1})}{\gamma_{k-1}(G_{k-1})} \quad\textrm{by definition~\eqref{eq:gamma} of } \gamma_{k-1}\\ \\
&=&\frac{\eta_{k-1}(K_k(f_k)G_{k-1})}{\eta_{k-1}(G_{k-1})} \quad\textrm{by dividing the numerator and denominator by}\  \gamma_{k-1}(\mathbf{1})\\
&=&((G_{k-1}\cdot \eta_{k-1})K_k)(f_k)\ ,
\end{eqnarray*}
where the $\cdot$ sign denotes the projective product between a non-negative function $G$ defined on $E$ and a non-negative measure $\mu \in \mathcal{M}^+(E)$ returning the probability measure $G\cdot \mu$ such that
\begin{equation}
\label{eq:dot}
(G\cdot \mu )(dx):=G(x)\mu (dx)/\mu(G)\ .
\end{equation}
Hence, one can describe the evolution from $\eta_{k-1}$ to $\eta_k$ into two steps 
\begin{equation}
\label{eq:nuEvol}
\eta_{k-1}\xrightarrow[]{\textrm{Correction}}
\hat{\eta}_{k-1}:=G_{k-1}\cdot \eta_{k-1}
\xrightarrow[]{\textrm{Evolution}}
\eta_k:=\hat{\eta}_{k-1}K_k
\ ,
\end{equation}
In other words, the sequence of probability measures $(\eta_k)$ satisfies the following recursion
\begin{equation}
\label{eq:nu_rec}
\left \{
\begin{array}{l}
\eta_0= \mu_0\ ,\quad \textrm{where}\ \mu_0:=\mathcal{L}(X'_{0})=\mathcal{L}(\check X_0)  \\ \\
\hat{\eta}_{k}:=G_{k}\cdot \eta_{k}\ , \quad \textrm{for all}\quad  1\leq k\leq n\ , \\ \\
\eta_{k+1}= \hat{\eta}_{k}K_k\ , \quad \textrm{for all}\quad  1\leq k\leq n\ .
\end{array}
\right .
\end{equation}


An Interacting Particle System  will be used to approximate the sequence of probability measures $(\eta_{k})_{0\leq k\leq n}$ by a sequence of empirical probability measures $(\eta^N_{k})_{0\leq k\leq n}$, such that for all $1\leq k\leq n$, $\eta^N_{k}$ is associated with an $N$-samples $(\xi^{1,N}_{k},\cdots, \xi^{N,N}_{k})$ approximately distributed according to $\eta_{k}$. To simplify the notation, we will often drop the exponent $N$ and write $(\xi^{i}_k)_{i=1,\cdots N}$ instead of $(\xi^{i,N}_k)_{i=1,\cdots N}$. 
The recursive evolution described by~\eqref{eq:nu_rec} is approximated by the following dynamics: 
\begin{equation}
\label{eq:nu_part}
\left \{
\begin{array}{l}
\eta^N_0=\mu_0\\ \\
\hat{\eta}^N_{k}=G_{k}\cdot \eta^N_{k} \ , \quad \textrm{for all}\quad 1\leq k\leq n\\ \\
\eta^N_{k+1}=S^N(\hat{\eta}^N_{k}K_{k})\ , \quad \textrm{for all}\quad 1\leq k\leq n\ ,
\end{array}
\right .
\end{equation}
where $S^N(\mu)$ denotes the empirical measure associated to an $N$-sample $(\xi^1,\cdots , \xi^N)$ i.i.d. according to $\mu$, that is 
$$
S^N(\mu)=\frac{1}{N}\sum_{i=1}^N\delta_{\xi^i}\ ,\quad \textrm{where}\ (\xi^1,\cdots ,\xi^N)\ \textrm{i.i.d.}\ \,\sim \, \mu\ . 
$$
Hence, the algorithm proceeds as follows. Recalling that $G_0=\mathbf{1}$,  we initiate the algorithm by generating $N$ i.i.d. random variables $(\xi^1_1,\cdots , \xi^N_1)$ according to $\mu_0$, then we set  
\begin{equation}
\label{eq:eta1}
{\displaystyle \eta^N_1=S^N(G_0\cdot\mu_0)=S^N(\mu_0)=\frac{1}{N}\sum_{i=1}^N \delta_{\xi^i_1}}\ .
\end{equation}
The evolution of the discrete measures, $(\eta^N_{k})_{0\leq k\leq n}$, (where $N$ denotes the size of the particle system) between two iterations $k$ and $k+1$, consists into three steps: 
\begin{enumerate}
	\item \textbf{Weighting}: each particle is weighted according to the value of the current potential function $G_{k}$. For all $i\in \{1,\cdots, N\}$, we compute ${\displaystyle \omega_{k}^{i} =\frac{G_{k}(\xi_{k}^{i})}{\sum_{j=1}^N G_{k}(\xi_{k}^{j})}}$ and we set  $\hat{\eta}_{k}^N= \displaystyle 
\sum_{i=1}^N \omega_{k+1}^{i}\, \delta_{\xi_{k+1}^{i}}$.  
	\item \textbf{Selection}:   
$N$ i.i.d. random variables $(\hat{\xi}_{k}^{1},\cdots, \hat{\xi}_{k}^{N})$ are generated according to the weighted discrete probability distribution $\hat{\eta}_{k}^N = \displaystyle 
\sum_{i=1}^N \omega_{k}^{i}\, \delta_{\xi_{k}^{i}}$.  More specifically, for all $i\in \{1,\cdots, N\}$, an index $I\in \{1,\cdots, N\}$ is generated independently  with probability $\mathbb{P}(I=j)=\omega_{k}^{j}$ and we set $\hat{\xi}_{k}^{i}={\xi}_{k}^{I}$. 
	\item \textbf{Mutation}:   
Each selected particle evolves independently according to the dynamics $K_{k+1}$. 
This produces a new particle system $(\xi_{k+1}^{1},\cdots, \xi_{k+1}^{N})$. 
More specifically, for all $i\in \{1,\cdots, N\}$, we generate independently $\xi^i_{k+1}$ according to the conditional distribution $\mathcal{L}(X'_{k+1} \vert {X}'_{k}=\hat \xi^i_k)$, then we set 
\begin{equation}
\label{eq:etak}
\eta_{k+1}^N= \displaystyle \frac{1}{N}\,\sum_{i=1}^N \delta_{\xi_{k+1}^{i}}\ .
\end{equation}
\end{enumerate}
For all $k\geq 1$, let us introduce $\gamma_k^N$, the particle approximation of $\gamma_k$ based on $\eta^N_k$ defined by recursion~(\ref{eq:nu_part}) and such that for any real valued measurable test function $f_k$ defined on $E_k$, 
\begin{equation}
\label{eq:gammapN}
\gamma^N_k(f_k)=\eta^N_k(f_k) \prod_{0\leq p\leq k-1} \eta^N_p(G_p)\ .
\end{equation}
We begin by stating a Lemma that will be crucial to prove the convergence of our new estimator.
\begin{lem}
\label{lem:gammaN}
Let $(X'_n)_{n\geq 0}$ be a Markov chain (with initial distribution $\mu_0$ and transition kernel $K_k$) defined on a sequence of measurable spaces $(E_n, \mathcal{E}_n)_{n\geq 0}$ and $(G_n)_{n\geq 0}$ be a sequence of positive measurable functions defined on $(E_n, \mathcal{E}_n)_{n\geq 0}$ such that there exists a finite constant $A\geq 2$ such that 
\begin{equation}
\label{eq:assG}
\sup_{x'_{0}\in E_{0}} G_0(x'_0)\leq A\ ,\quad \textrm{and}\quad \sup_{x'_{p-1}\in E_{p-1}} \E[G^2_{p}(X'_{p})\vert X'_{p-1}= x'_{p-1}]\leq A\ ,\quad\textrm{for any}\  p\geq 1 \ .
\end{equation}
 We consider the sequence of Feynman-Kac measures $(\gamma_n)$ such that for any measurable real valued function $f_n$ defined on $E_n$, 
\begin{equation}
\label{eqFK}
\gamma_n(f_n):=\E[f_n(X'_{n})\prod_{k=0}^{n-1} G_k(X'_{k})]\ .
\end{equation}
Let $(\gamma_n^N)$ be a sequence of particle approximation measures of $(\gamma_n)$ defined similarly as in~\eqref{eq:gammapN}, with $(\eta_p^N)_{0\leq p}$ defined by~\eqref{eq:nu_part}. 
For a given $n\geq 1$, let us consider a real valued measurable function $f_n$ defined on $E_n$ such that there exists a finite positive constant $B$ such that 
\begin{equation}
\label{eq:assf}
\sup_{x'_{p-1}\in E_{p-1}}\vert \E[f^2_n(X'_n)G^2_{p:n-1}(X'_{p:n})\vert X'_{p-1}=x'_{p-1}]\leq B\quad \textrm{for any}\  p=1,\cdots n \ .
\end{equation}
Then the particle approximation $\gamma_n^N(f_n)$ is unbiased with finite variance, more precisely
\begin{equation}
\label{eq:BiasVargammaLem}
\E[\gamma^N_n(f_n)]=\gamma_n(f_n)\ ,\ \textrm{and}\quad \E[\big (\gamma^N_n(f_n)-\gamma_n(f_n)\big )^2]\leq 2B\frac{A^{n+2}}{N}\quad \textrm{for}\  N\geq A^{n+1}\ .
\end{equation}
\end{lem}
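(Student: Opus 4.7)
My plan is to apply the classical Feynman--Kac semigroup analysis of interacting particle systems. Introduce the backward semigroup $Q_{p,n}$ mapping measurable functions on $E_n$ to functions on $E_p$ by $Q_{p,n}(f)(x_p') := \E[f(X_n')\prod_{k=p}^{n-1}G_k(X_k')\,|\,X_p'=x_p']$, with $Q_{n,n}=\mathrm{id}$. This satisfies the semigroup identity $Q_{p-1,n}=G_{p-1}K_p Q_{p,n}$ and the boundary relation $\mu_0 Q_{0,n}(f_n)=\gamma_n(f_n)$.

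Let $\mathcal{F}_p^N$ denote the sigma-algebra generated by all particles up to generation $p$. The crucial structural fact is that, conditionally on $\mathcal{F}_{p-1}^N$, the mutated particles $(\xi_p^i)_{i=1,\ldots,N}$ are i.i.d.\ with common law $\hat{\eta}_{p-1}^N K_p$. Combined with the definition of $\gamma_p^N$, this yields the recursion $\E[\gamma_p^N(h)\,|\,\mathcal{F}_{p-1}^N]=\gamma_{p-1}^N(G_{p-1}K_p(h))$ for any bounded measurable $h$ on $E_p$. Applied to $h=Q_{p,n}(f_n)$, this shows that $U_p := \gamma_p^N(Q_{p,n}(f_n))$, for $p=0,\ldots,n$, is an $\mathcal{F}_p^N$-martingale with deterministic initial value $U_0 = \gamma_n(f_n)$ (since $\eta_0^N=\mu_0$) and terminal value $U_n = \gamma_n^N(f_n)$. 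Taking expectations immediately gives the unbiasedness $\E[\gamma_n^N(f_n)]=\gamma_n(f_n)$.

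For the variance bound I would invoke orthogonality of martingale increments, $\E[(\gamma_n^N(f_n)-\gamma_n(f_n))^2]=\sum_{p=1}^n \E[(U_p-U_{p-1})^2]$. Denoting $Z_p:=\prod_{k=0}^{p-1}\eta_k^N(G_k)=\gamma_p^N(\mathbf{1})$ (which is $\mathcal{F}_{p-1}^N$-measurable), the conditional i.i.d.\ structure at generation $p$ gives $\E[(U_p-U_{p-1})^2\,|\,\mathcal{F}_{p-1}^N]\le \frac{Z_p^2}{N}\hat{\eta}_{p-1}^N K_p(Q_{p,n}(f_n)^2)$. By Jensen, $Q_{p,n}(f_n)^2\le \E[f_n^2(X_n')\prod_{k=p}^{n-1}G_k^2(X_k')\,|\,X_p']$, and one more application of $K_p$ together with the tower property places the expression inside the bound of assumption~\eqref{eq:assf}, giving $\hat{\eta}_{p-1}^N K_p(Q_{p,n}(f_n)^2)\le B$. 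Hence $\E[(U_p-U_{p-1})^2]\le \frac{B}{N}\E[Z_p^2]$.

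The main technical obstacle is the $L^2$ control of the random normalizing constant $Z_p=\gamma_p^N(\mathbf{1})$. The key one-step estimate is, again by the conditional i.i.d.\ decomposition, $\E[\eta_p^N(G_p)^2\,|\,\mathcal{F}_{p-1}^N]=\tfrac{1}{N}\hat{\eta}_{p-1}^N K_p(G_p^2)+(1-\tfrac{1}{N})(\hat{\eta}_{p-1}^N K_p(G_p))^2$, which by two successive Jensen inequalities and assumption~\eqref{eq:assG} is uniformly bounded by $A$. Iterating this starting from $Z_1^2=\mu_0(G_0)^2\le A^2$ gives $\E[Z_p^2]\le A^{p+1}$. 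Summing the martingale-increment variances then yields $\sum_{p=1}^n \E[(U_p-U_{p-1})^2]\le \frac{B}{N}\sum_{p=1}^n A^{p+1}\le \frac{2BA^{n+2}}{N}$ for $A\ge 2$, which matches the announced bound; the condition $N\ge A^{n+1}$ in the statement likely originates from absorbing finer intermediate corrections into a cleaner final estimate.
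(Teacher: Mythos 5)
Your proposal is correct, and the core argument (martingale decomposition through the Feynman--Kac semigroup $Q_{p,n}$, conditional variance bound from the exchangeable particle mutation, and Jensen plus assumption~\eqref{eq:assf} to get the $B/N$ factor) is exactly the one used in the paper. The genuine difference lies in how you control the random normalization $Z_p=\gamma_p^N(\mathbf{1})=\prod_{k=0}^{p-1}\eta_k^N(G_k)$. The paper's proof bounds $\E[(\gamma_{p-1}^N(G_{p-1}))^2]$ indirectly, by writing $\gamma_{p-1}^N(G_{p-1})=\gamma_{p-1}(G_{p-1})+\Gamma_{p-1}^N(G_{p-1})$ and running a secondary recursion for $\E[(\Gamma_k^N(G_{k,p}))^2]$ with $G_{k,p}=Q_{k,p}(G_p)$; this auxiliary estimate produces the exponent $A^{p+1}$ as $(1+A^{n+1}/N)^p-1$ and is where the hypothesis $N\ge A^{n+1}$ gets used. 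You instead bound $\E[Z_p^2]$ directly via the one-step inequality $\E[\eta_{p-1}^N(G_{p-1})^2\,\vert\,\mathcal{F}_{p-2}^N]\le \tfrac1N\hat\eta_{p-2}^N K_{p-1}(G_{p-1}^2)+(1-\tfrac1N)(\hat\eta_{p-2}^N K_{p-1}(G_{p-1}))^2\le A$, exploiting that $Z_{p-1}$ is $\mathcal{F}_{p-2}^N$-measurable, which gives $\E[Z_p^2]\le A\,\E[Z_{p-1}^2]\le A^{p+1}$ with no restriction on $N$. This is a cleaner route: it dispenses with the ancillary quantities $\Gamma_k^N(G_{k,p})$ entirely, avoids the $(a+b)^2$ splitting, and yields $\E[(\gamma_n^N(f_n)-\gamma_n(f_n))^2]\le 2BA^{n+1}/N\le 2BA^{n+2}/N$ for every $N\ge 1$, making the hypothesis $N\ge A^{n+1}$ in the lemma superfluous (it is a relic of the paper's bookkeeping, not of the statement). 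Your only imprecision is the suggestion that $N\ge A^{n+1}$ "likely originates from absorbing finer intermediate corrections" — it originates specifically from the paper's detour through the $\Gamma_k^N(G_{k,p})$ recursion, which your argument avoids.
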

The proof of this Lemma relies on the formalism developed in the reference books~\cite{DelMoral,DelMoral2}. However, we had to carry out an original proof to take into account our specific framework where the potential functions $G_k$ are unbounded which is not considered to our knowledge in the existing literature. The proof is placed in the Appendix~\ref{sec:Appendix}. 

We are now in a position to state the main result of this section.
%
%
\begin{thm}
\label{thm:gammaN}
Suppose that Assumptions~\ref{ass:unique2},~\ref{ass:Bound}
and~\ref{ass:LH} are satisfied. 
For any $n\geq 1$, the resampling estimator $\gamma_n^N(\varphi_n)$ defined by~\eqref{eq:gammapN} is unbiased with finite variance. More precisely, 
\begin{equation}
\label{eq:BiasVargamma}
\E[\gamma^N_n(\varphi_n)]=u(t_0,x_0)\ ,\quad \textrm{and}\quad \E[\big (\gamma^N_n(\varphi_n)-u(t_0,x_0)\big )^2]\leq \frac{C^{n+2}}{N}\quad \textrm{for}\ N\geq C^{n+1}\ ,
\end{equation}
where $(\varphi_n)_{n\geq 1}$ is a sequence of real valued functions defined on $E_n$ by~\eqref{eq:varphi} and $C$ is a constant depending only on the characteristics of the problem ($T$, the bounds or Lipschitz constants related to $g$, $b$, $\sigma$, $a$).   
\end{thm}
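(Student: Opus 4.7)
The plan is to invoke Lemma~\ref{lem:gammaN} with $f_n = \varphi_n$, combined with the representation $u(t_0, x_0) = \gamma_n(\varphi_n)$ provided by Proposition~\ref{prop:ugamma}. Both assertions in \eqref{eq:BiasVargamma} will then follow directly from \eqref{eq:BiasVargammaLem}, once the two hypotheses \eqref{eq:assG} and \eqref{eq:assf} of the lemma are verified with constants depending only on the problem data.

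The first hypothesis is essentially already in hand: $G_0 \equiv 1$ is trivially bounded, and for $p \geq 1$ the conditional second-moment bound \eqref{eq:BoundGk}, established under Assumptions~\ref{ass:unique2}, \ref{ass:Bound} and \ref{ass:LH}, yields $\sup_{\check x_{0:p-1}} \E[G_p^2(\check X_{0:p}) \mid \check X_{0:p-1} = \check x_{0:p-1}] \leq C$. One may thus take $A := C \vee 2$.

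The main task is to verify \eqref{eq:assf} for $f_n = \varphi_n$. Applying Jensen's inequality to the definition \eqref{eq:varphi} of $\varphi_n$ and using $S_{1:N_T}^2 = 1$, I would first bound
$$
\varphi_n^2(\check x_{0:n}) \leq \E\bigl[(\beta_{N_T+1} H_{N_T+1})^2(\check X_{0:N_T+1})\, G_{n:N_T}^2(\check X_{0:N_T}) \bigm| \check X_{0:n} = \check x_{0:n}\bigr].
$$
Multiplying by $G_{p:n-1}^2$ and taking a conditional expectation given $\check X_{0:p-1}$ reduces, by the tower property, the quantity to control to $\E[(\beta_{N_T+1} H_{N_T+1})^2\, G_{p:N_T}^2 \mid \check X_{0:p-1}]$ on the event $\{N_T \geq n\}$ (the boundary case $\{N_T < n\}$ is trivial since then $G_{n:N_T}$ is an empty product equal to $\mathbf{1}$, and $(\beta_{N_T+1} H_{N_T+1})^2 G_{p:N_T}^2$ is directly bounded as in Proposition~\ref{prop:finVar}). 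By construction of $H_{N_T+1}$ in \eqref{eq:HH}, the factor $(\delta T_{N_T})^{1-\kappa}\sqrt{c_{N_T}(\check X_{0:N_T})}$ that inflates $G_{N_T}$ relative to $P_{N_T}$ is precisely canceled, so on $\{N_T = m\}$ the integrand coincides, up to bounded factors, with $\beta^2 \prod_{k=p+1}^{m} P_k^2$, and the analysis reduces to a conditional version of the chain of Cauchy-Schwarz inequalities already carried out in Proposition~\ref{prop:finVar}, namely \eqref{eq:BoundBeta}, \eqref{eq:BoundPk} and \eqref{eq:cPkbis}.

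The main obstacle will be to re-run these bounds conditionally on $\check X_{0:p-1}$ rather than on $\bar{\mathcal{F}}_0$, then decompose the expectation as $\sum_{m \geq n} \E[\,\cdots\,\mathbf{1}_{N_T = m} \mid \check X_{0:p-1}]$, obtain a per-term bound of order $C^{m}$ uniform in $\check x_{0:p-1}$, and finally sum against $\PP(N_T = m \mid \check X_{0:p-1})$. The Stirling argument used at the end of the proof of Proposition~\ref{prop:finVar}, together with $\kappa \leq \alpha \wedge 1/2$, shows that $\sum_{m \geq 1} C^{m} \PP(N_T = m) < \infty$; the conditional counterpart is obtained in the same way, since given $T_{p-1}$ the remaining times $(\tau_k)_{k \geq p}$ are still i.i.d.\ with density $f_\Gamma^{\kappa,\theta}$, merely shifting the starting point. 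This yields a finite constant $B$ depending only on $n$ and the problem data, and a direct application of Lemma~\ref{lem:gammaN} with these $A$ and $B$ then delivers the unbiasedness and variance bound announced in \eqref{eq:BiasVargamma}.
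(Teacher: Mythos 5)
Your proposal takes essentially the same route as the paper: invoke Proposition~\ref{prop:ugamma} and Lemma~\ref{lem:gammaN}, verify \eqref{eq:assG} via \eqref{eq:BoundGk}, and verify \eqref{eq:assf} by conditioning on $N_T=q$, using the cancellation built into $H_{N_T+1}$ to reduce to the Cauchy--Schwarz chain \eqref{eq:cPkbis} from Proposition~\ref{prop:finVar}, then summing against $\mathbb{P}(N_T=q)$ with the Stirling argument. Your explicit Jensen step and the remark on the conditional version of the tail-sum are correct clarifications of steps the paper leaves implicit, but the argument is the same.
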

\begin{rem}
\label{rem:gamman}
\begin{enumerate}
	\item Computing $\gamma_n^N(\varphi_n)$ reduces to compute the following product of empirical means 
\begin{eqnarray*}
\gamma_n^N(\varphi_n)&=&\eta_n^N(\varphi_n)\eta_{n-1}^N(G_{n-1})\cdots \eta_0^N(G_0)\\
&=&\left (\frac{1}{N}\sum_{i=1}^N \varphi_n(\xi^i_n)\right )\left (\frac{1}{N}\sum_{i=1}^N G_{n-1}(\xi^i_{n-1})\right )\cdots \left (\frac{1}{N}\sum_{i=1}^N G_{0}(\xi^i_{0})\right )\ ,
\end{eqnarray*}
where $(\xi^i_k)_{1\leq i\leq N}$ is the particle system at the $k$th iteration of the algorithm as stated by~\eqref{eq:etak}. 
This in particular requires to compute $\varphi_n(\xi^i_n)$ for each particle of the final particle system $(\xi^i_n)_{i=1,\cdots N}$. Recalling Remark~\ref{rem:varphin}, this may require to compute a conditional expectation. 
In practice, one chooses $n$ large enough such that most of particles have already reached time $T$ after $n$ iterations 
implying that for most particles $\varphi_n(\xi_n^i)$ can be computed explicitly. 
In the rare cases of particles $\xi^i_n$ that have not reached yet time $T$, the computation of $\varphi_n(\xi_n^i)$ that should normally require to compute a conditional expectation is approximated by one simulation according to 
$$
\mathcal{L}((\beta_{N_T+1}H_{N_T+1}S_{0:N_T+1})(\check X_{0:N_T+1})G_{n:N_T}(\check X_{0:N_T})\,\vert \check X_{0:n}=\xi^i_{n})\ .
$$
Notice that it would be interesting to consider the estimator 
$$
\gamma_{n_N}^N(\varphi_{n_N})\ ,\quad \textrm{with}\quad n_N=\inf \{n\,\vert\,\xi^i_n  \ \textrm{has reached $T$ for all}\ i=1,\cdots N\}\ .
$$
This will be left for future work. 
\item Another approach to avoid this problem would consists in doing the resampling procedure only on the space variables. First simulate a sequence of random switching times $(T_1,\cdots ,T_{N_T})$ and conditionally to this time mesh run an interacting particle system on the Markov chain $\bar X$~\eqref{eq:Xk}. The estimator would then be given as an empirical mean of the resampling estimates over i.i.d. time meshes. 
\end{enumerate}
\end{rem}
\begin{proof}
Theorem~\ref{thm:gammaN} is a direct consequence of Proposition~\ref{prop:ugamma} stating that $\gamma_n(\varphi_n)=u(t_0,x_0)$ and of Lemma~\ref{lem:gammaN} after having verified that there exists a finite positive constant $C$ for which the bounds~\eqref{eq:assG} and~\eqref{eq:assf} are verified. Observe that~\eqref{eq:assG} is automatically implied by~\eqref{eq:BoundGk}. 
Let us consider~\eqref{eq:assf}, similarly to the proof of Proposition~\ref{prop:finVar} one obtains
\begin{equation}
\label{eq:sum}
\begin{array}{l}
\E[\varphi^2_n(\check X_{0:n})G^2_{p:n-1}(\check X_{p:n})\vert \check X_{0:p-1}=\check x_{0:p-1}]
\\
=
\sum_{q=0}^\infty 
\E[\varphi^2_n(\check X_{0:n})G^2_{p:n-1}(\check X_{p:n})\vert \check X_{0:p-1}=\check x_{0:p-1}, N_T=q]\mathbb{P}(N_T=q)
\ .
\end{array}
\end{equation}
Now considering the general term of this sum for $q\geq p\geq 2$
$$
\begin{array}{l}
\E[(\beta_{N_T+1}^2H^2_{N_T+1})(\check X_{0:N_T+1})\prod_{k=p}^{N_T}G^2_{k}(\check X_{0:k})\,\vert \check X_{0:p-1}=\check x_{0:p-1}, N_T=q]
\\ 
= 
\E[\beta^2\frac{1}{c_{p-1}(\check X_{0:p-1})}\frac{1}{(\delta T_{p-1})^{2(1-\kappa)}} \prod_{k=p-1}^{q-1} P^2_{k+1}\,\vert \check X_{0:p-1}=\check x_{0:p-1}, N_T=q]\\
\leq 
C\E[(\delta T_q)^{2(1-\kappa)}\frac{1}{c_{p-1}(\check X_{0:p-1})}\frac{1}{(\delta T_{p-1})^{2(1-\kappa)}} c_{q}(\check X_{0:q})P^2_q\prod_{k=p-1}^{q-2} P^2_{k+1}\,\vert \check X_{0:p-1}=\check x_{0:p-1}, N_T=q]\ ,
\end{array}
$$
where $C$ is a constant that may change from line to line.
Recalling~\eqref{eq:cPkbis} finally gives 
$$
\begin{array}{l}
\E[(\beta_{N_T+1}^2H^2_{N_T+1})(\check X_{0:N_T+1})\prod_{k=p}^{N_T}G^2_{k}(\check X_{0:k})\,\vert \check X_{0:p-1}=\check x_{0:p-1}, N_T=q]
\\
\leq 
C^{q-p+1}\E[ (\delta T_q)^{2(1-\kappa)}\frac{c_{p-1}(\check X_{0:p-1})}{c_{p-1}(\check X_{0:p-1})}\frac{1}{(\delta T_{p-1})^{2(1-\kappa)}}\prod_{k=p-1}^{q-1} \frac{(\delta T_{k})^{2(1-\kappa)}}{(\delta T_{k+1})^{2((1-\alpha)\vee\frac{1}{2})}}
\,\vert N_T=q]\\
\leq 
C^{q-p+1}\ .
\end{array}
$$
We proceed similarly when $p=1$. 
  %
We conclude by observing that the sum~\eqref{eq:sum} is finite by the same argument as in the proof of Proposition~\ref{prop:finVar}. 
\end{proof}

\section{Numerical simulations}
\label{sec:simu}

In this section, we begin by an empirical analysis of complexity then we analyse and compare the performances of the three approaches described previously 
\begin{enumerate}
\item Switching Monte Carlo method with exponential switching times;
\item Switching Monte Carlo method with gamma switching times (with parameter $\kappa\leq 1/2$);
\item Resampling and Switching Monte Carlo method with gamma switching times  (with parameter $\kappa\leq 1/2$). 
\end{enumerate}
On one test case, we compare numerically the Switching Monte Carlo method with gamma switching times with  the Euler Monte Carlo method.\\
First, we consider a simple example for which all assumptions of Proposition \ref{prop:finVar}  are satisfied. 
Then we consider simulations involving a more standard payoff function $g$ occuring in finance (corresponding to the call option) that does not fulfill Assumption \ref{prop:finVar}. However, this offers the opportunity to check the robustness of our approach out of theoritical assumptions.\\
In all cases, we consider 
\begin{itemize}
\item a drift coefficient  $ b(t,x)= -10 \vee (1-x) \wedge 10$,
\item an initial condition $x_0=1$,
\item a terminal time $T=1$.
\end{itemize}
The parameters of the switching time distributions is  $\lambda=0.4$ for the exponential distribution.  Even if the exponential distribution gives a theoretical infinite variance (in cases we consider here), the numerical variance observed is finite so
it is interesting to compare the results obtained by the gamma distribution and the exponential distributions. \\
To implement efficiently the different methods on a computer using many cores (96 on our computer), we allocate $N$ particles to each core such that the total number of particles used is  $n_{\textrm{part}}=96N$. When resampling is used, a resampling estimator $\gamma_p^{N,j}(\varphi_p)$ is simulated independently on each core $j=1,\cdots ,\,96$ and we return the average estimator~: $\frac{1}{96}\sum_{j=1}^{96} \gamma_p^{N,j}(\varphi_p)$. 
Then the procedure is repeated independently for $1000$ estimations, so as to approximate empirically the expectation and the variance of each estimator by the empirical average and variance computed on the $1000$ estimates.\\
The whole procedure is then repeated for different values of $n_{\textrm{part}}=4^qn_0$ from $q=0$ to $q=5$, with   $n_0=10^4$.
We reported on the graphs the evolution of the estimator expectation as a function of $\log (n_{\textrm{part}})$ and the related standard deviation is represented on $\log$-$\log$ graphs. On each figure devoted to the standard deviation, the theoretical decrease at a rate $1/(n_{\textrm{part}})^{1/2}$ is represented by the plot of a line  with slope $-0.5$.

\subsection{Complexity analysis}
The Switching Monte Carlo method requires to simulate, for each trial,  a random number of time steps, $N_T$, before reaching $T$. In order to analyze the impact of the parameters $\kappa$ and $\theta$ on the complexity of the algorithm, we consider  $\hat N_T:=\E[N_T]$ as a function of $(\kappa,\theta)$. As we couldn't derive any analytical approximation, we have computed  a numerical
estimate which is reported on Figure \ref{figNT} for different values of $\kappa$ and $\theta$.
\begin{figure}[H]
\centering
\includegraphics[width=12cm]{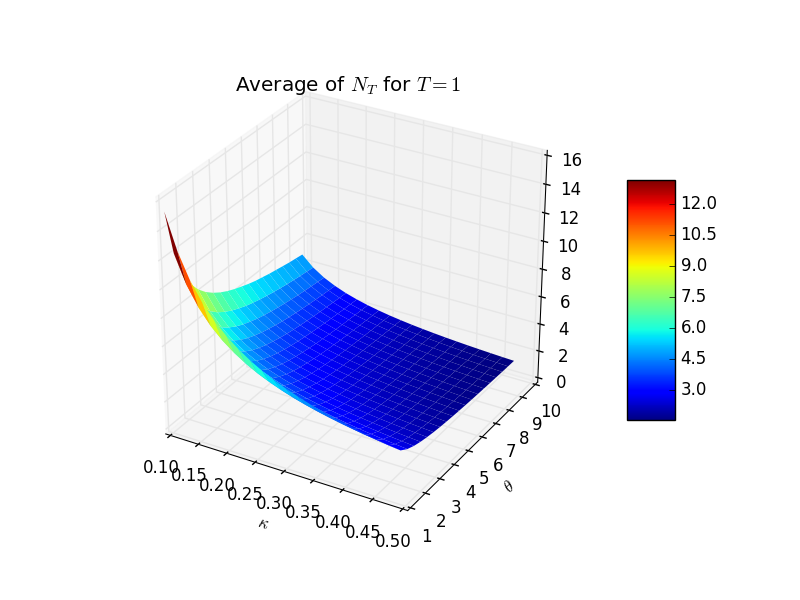}
\caption{$\hat N_T$ for different values of $\kappa$ and $\theta$ and $T=1$.}
\label{figNT}
\end{figure}
One can observe that the expected number of time steps increases as $\theta$ or $\kappa$ decreases. More precisely,  $\hat N_T(\kappa,\theta)$ can be  accurately estimated by the following  polynomial  approximation:
\begin{eqnarray*}
\hat N_T(\kappa, \theta) =15.84   -1.63 \theta  -46.16   \kappa +  46.36 \kappa^2 +  1.47 \theta \kappa\ ,
\end{eqnarray*}
for $\kappa \in [0.2, 0.5]$ and  $\theta \in [1,10]$, recalling that we are only interested by values of $\kappa\leq 1/2$. 
Besides, at each switching time of each trial the computational complexity is given by
\begin{eqnarray*}
C_{switch}(\kappa,\theta) + d (  C_{Gauss} +c_2)  + c_1 d^{2.3727} 
\end{eqnarray*}
where $c_1,c_2$ are given constants,  $C_{switch}(\kappa,\theta)$ is the complexity of generating the switching time (according to an exponential or a gamma law depending on the approach),  $C_{Gauss}$ is the complexity for generating a Gaussian r.v.,
and the term in $d^{2.3727}$ is the theoretical optimal cost  for $\sigma$ inversion by a $LU$ method.\\
The global complexity of the algorithm  without resampling for $n_{\textrm{part}}$ simulations is in high dimension:
\begin{eqnarray*}
  c  d^{2.3727} n_{\textrm{part}}
\end{eqnarray*}
\begin{rem}
  Notice that, based on our numerical tests,  the cost, $C_{switch}$ (in the gamma case),  of generating  a gamma r.v.  with a rejection method is on average between $300$ and $500$  floating operations, whereas the cost, $C_{Gauss}$, of  
  generating a Gaussian random variable requires around $10$ floating operations.
 Hence, for low dimension, the leading term corresponds to $C_{switch}$.
\end{rem}
With resampling, we have to add some operations independent of the dimension of the problem:  using the order statistics of the exponential law, we are able to generate some sorted uniformly distributed random variables that are used to select the particles during the selection step with a  cost linear with $n_{\textrm{part}}$.

\begin{rem}
  The resampling method, by imposing to store the states of all simulations simultaneously, gives a computational cost (including the memory access time) increasing slightly  more than linearly with $n$ (see Figure \ref{FigTime} below). The advantage of the method without resampling comes from the fact that the memory access time is weaker so that the computational cost is strictly linear with the number of particles. 
\end{rem}

\subsection{An example with $g(x)=cos( x)$, $\sigma(t,x)= 0.5  + 0.2 (x^2 \wedge 1) $.}
In dimension 1,  we give on Figure~\ref{FigCaseA} the convergence observed with the exponential law and the gamma law with and without resampling.

\begin{figure}[H]
\centering
\includegraphics[width=7.5cm]{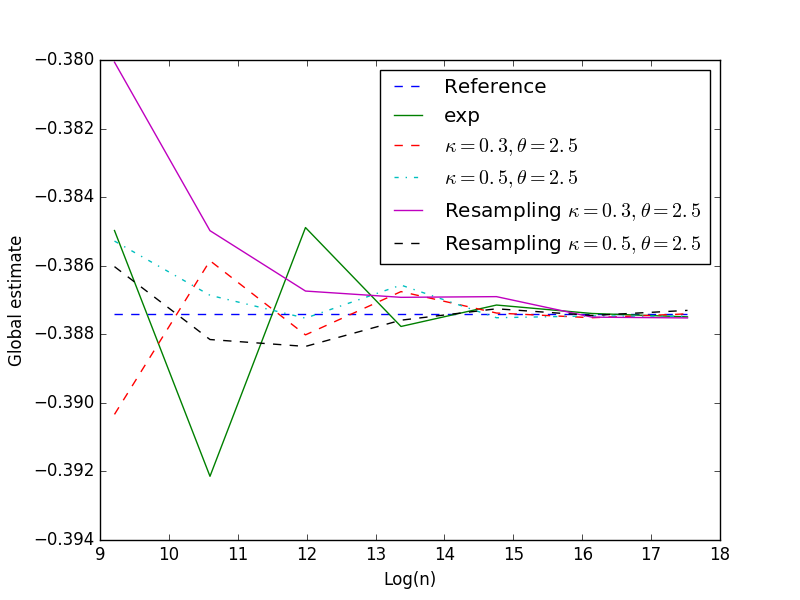}
\includegraphics[width=7.5cm]{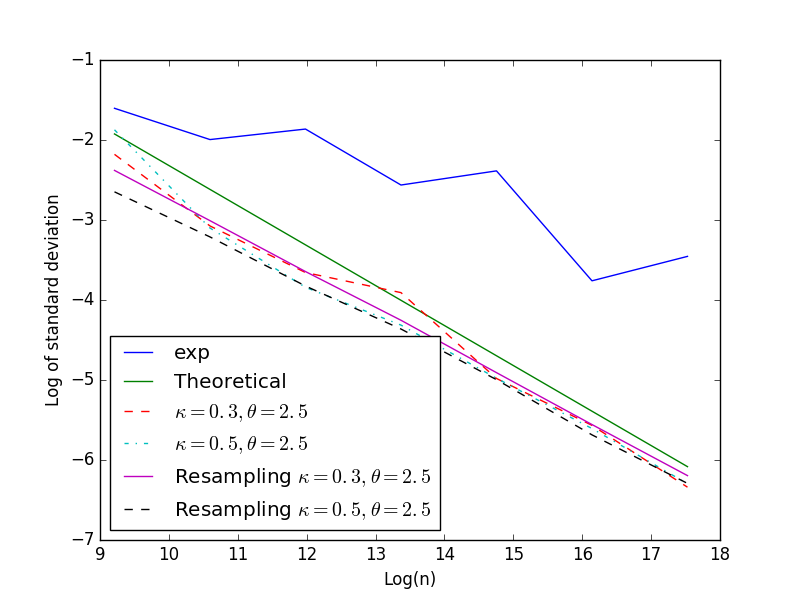}
\caption{Estimation and standard deviation observed for  case 1 (dimension 1).}
\label{FigCaseA}
\end{figure}
The method converges easily with the gamma laws.
Using the exponential distribution, the empirical standard deviation seems to decrease to zero but the rate $1/\sqrt{n}$ cannot be diagnoseds: this is the consequence of an infinite theoretical variance.\\
On this case, the resampling doesn't improve much the results because of the small variation of the $\sigma$ function.
With the gamma laws, the linear decay of the log of the standard deviation follows the theory with a slope equal to $-\frac{1}{2}$ with respect to $\log(n_{\textrm{part}})$.

\subsection{ One dimensional tests with $g(x)= (x-1)^{+}$.}
With this kind of $g$ function assumptions of Proposition \ref{prop:finVar} are not satisfied. We will show nevertheless that the method gives good results.
Because the variance of the results is closely related to the diffusion coefficient variation, we will consider various examples with $\sigma$ getting more and more space dependent.
\subsubsection{ $\sigma(t,x)= 0.5  + 0.2 (x^2 \wedge 1) $}
This second case shows some quite small variations of $\sigma$.  The reference value is $0.17466$. Evolution of the global estimate and the standard deviation are given
on Figure \ref{FigCase1} for gamma and exponential laws.
\begin{figure}[H]
\centering
\includegraphics[width=7.5cm]{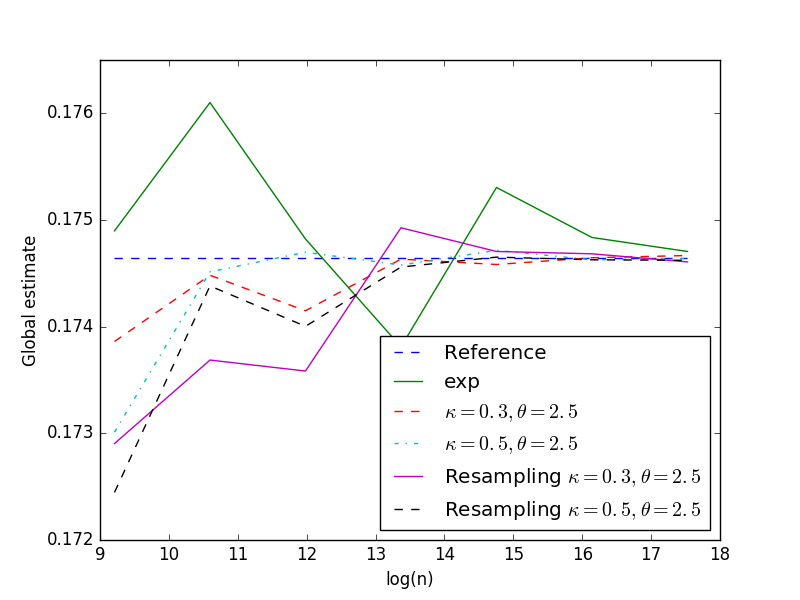}
\includegraphics[width=7.5cm]{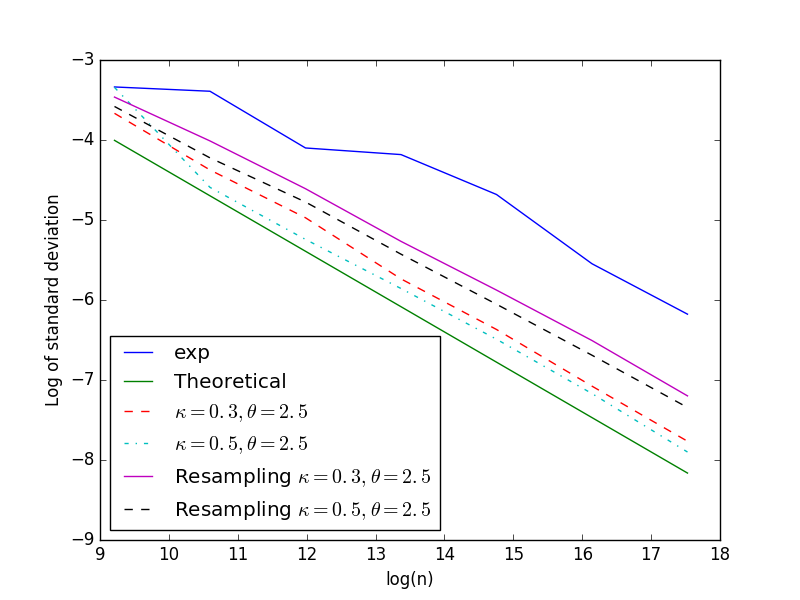}
\caption{Estimation and standard deviation observed for  case 2.}
\label{FigCase1}
\end{figure}
In this simple case easily converging as in the first case, resampling doesn't improve the standard deviation.

We are interested in comparing numerically the Switching Monte Carlo method (SMC) with the Euler Monte Carlo method (EMC). It is well known  \cite{Talay} that the error due the EMC method with a time discretization step, $h$, and a number of particles, $n_E$, can be decomposed into a bias term, 
$C_{E} h$, and a standard deviation term, ${S}/{\sqrt{n_{E}}}$. To achieve a fixed level of accuracy, $\varepsilon$ by balancing the two types of errors we quasi-optimally chose $h(\varepsilon): ={\varepsilon}/{(2\hat C_E)}$ and $ n_E(\varepsilon): = ({2 \hat S}/{\varepsilon})^2$, where  $\hat S$ and respectively $\hat C_E$ have been estimated 
 using a reference calculation with $30\times 10^6$ particles and ${1000}$ time steps and respectively using  $100$  time steps.
Using the empirical variance computed on $1000$ SMC estimators, we have estimated  the error, $\varepsilon$,  of the SMC method for different numbers of particles $n_{\textrm{part}}$. For each, error $\varepsilon$, we have used an  Euler Monte Carlo (EMC) with a time step $h(\varepsilon)$ and $n_E(\varepsilon)$ simulations to achieve the same error $\varepsilon$.  On Table~\ref{tabEuler}, we have reported for each error, $\varepsilon$, and for each approach, SMC or EMC, the associated computing time (Time), mean estimate (Mean), number of particles ($n_{\textrm{part}}$, $n_E$), and for the EMC approach, we have further reported the number of time steps. 
\begin{table}[H]
\caption{\label{tabEuler} Comparison of the Switching  Monte Carlo method (SMC) $\theta =2.5$, $\kappa=0.5$ without resampling  and  the Euler Monte Carlo method (EMC). The Reference value is $0.17466$. }
\begin{tabular}{|c|c|c|c|c|c|c|c|} \hline
 $\varepsilon$ & 8e-4 & 0.000398 & 0.000219 & 1.02 e-4  & 5.42e-5 & 2.67e-5 & 1.34e-5 \\ \hline\hline
  SMC Time& 0.65 & 2.11 & 8.88 & 34.76 & 125.85 & 502.49 & 1991.43 \\ \hline
SMC Mean& 0.173861 & 0.174482 & 0.17415 & 0.174633 & 0.174583 & 0.174646 & 0.17466 \\ \hline
  SMC $n_{\textrm{part}}$  & 10000 & 40000 & 160000 & 640000 & 2560000 & 1024e4 & 4096e4 \\ \hline\hline
 EMC Time& 0.11 & 0.93 & 5.62 & 55.5 & 369.67 & 3096.63 & 24429 \\\hline
 EMC Mean& 0.175225 & 0.174888 & 0.174615 & 0.174613 & 0.174674 & 0.174667 & 0.17465 \\ \hline
 EMC $n_E$  & 432025 & 1779528 & 5879543 & 27e6 & 96e6 & 396e6 & 1.579e9 \\ \hline
EMC $T/h$ & 145 & 295 & 536 & 1151 & 2167 & 4402 & 8763 \\ \hline
\end{tabular}
\end{table}
One can observe on  Table~\ref{tabEuler} that for errors $\varepsilon$ up to $10^{-5}$ the SMC method appears to be  slower than the Euler scheme by a factor between $1.5$ and $4$  whereas for very high precisions it begins to be more effective in the considered case. \\
On Figure \ref{FigTime}, we give the computional time of each method depending on the switching laws  and their parameters.
\begin{figure}[H]
\centering
\includegraphics[width=7.5cm]{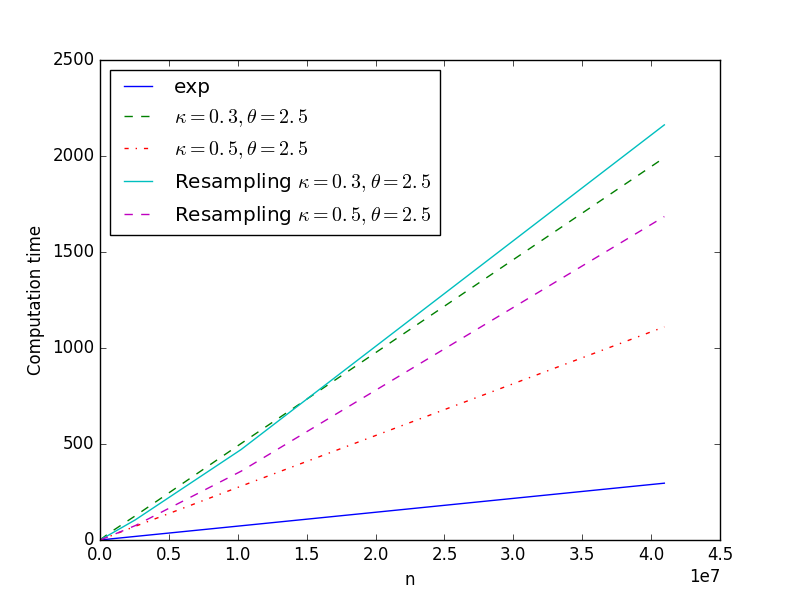}
\caption{Computational time as a function of the particle number, for different Switching Monte Carlo approaches (exponential, gamma, gamma with resampling) and parameters.}
\label{FigTime}
\end{figure}
One can observe on Figure~\ref{FigTime} that indeed  the computational time of the Switching Monte Carlo method is proportional to the number of particles. Of course, using a gamma distribution instead of an exponential one increases the number of switching times  and hence the computational time.
One can observe that with resampling,  the computational time increases slightly more than linearly with the number of particles. \\
On Figure \ref{KappaDependence}, for different levels of accuracy, we compute the computational time needed for the SMC method with gamma switching times to reach a given accuracy, depending on the $\kappa$ parameter ($\theta$ being fixed to $2.5$). 
\begin{figure}[H]
\centering
\includegraphics[width=8cm]{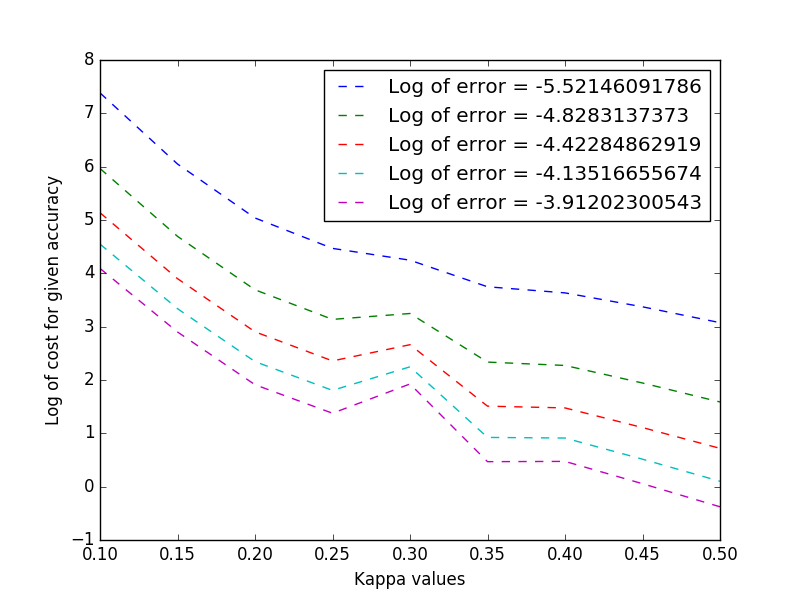}
\caption{Computional cost as a function of  $\kappa$ for different levels of accuracy.}
\label{KappaDependence}
\end{figure}
Clearly the optimal parameter is $\kappa=0.5$. In fact using a parameter $\kappa= 0.5$ doesn't improve much the accuracy of the result but the computional time required is smaller.

\subsubsection{ $\sigma(t,x)= 0.5  + 0.4 (x^2 \wedge 1) $.}
With this more difficult case, we report results obtained with $\theta=2.5$, $\kappa =0.3$, $\kappa=0.5$ with and without resampling for the gamma distribution and for the exponential
distribution.
The reference value is $0.21408$. Evolution of the mean estimate and the standard deviation are given
on Figure \ref{FigCase2}.
\begin{figure}[H]
\centering
\includegraphics[width=7.5cm]{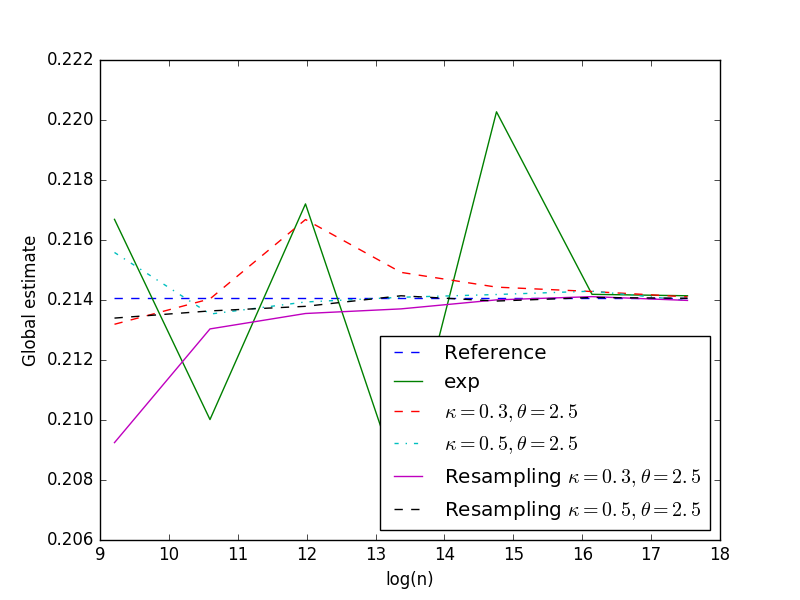}
\includegraphics[width=7.5cm]{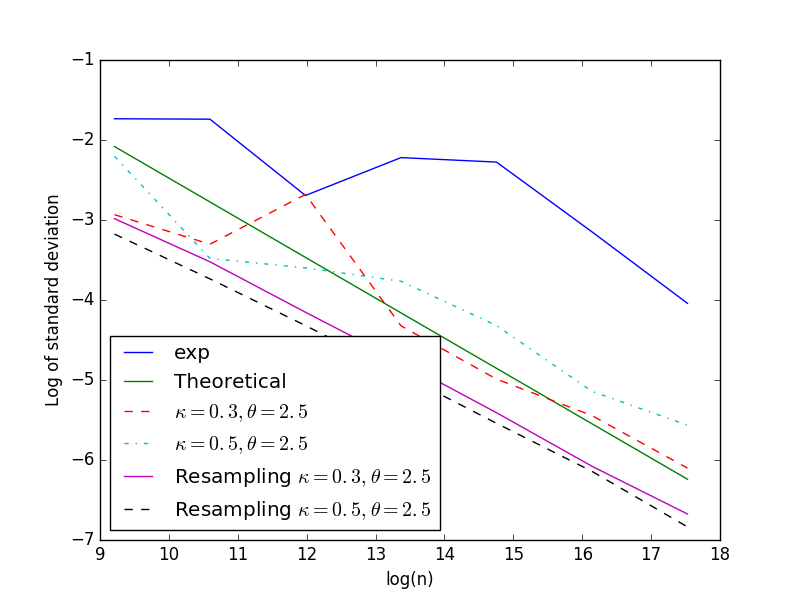}
\caption{Estimation and standard deviation observed for case 3.}
\label{FigCase2}
\end{figure}
With or without resampling, the standard deviation is decreasing steadily while using gamma distributions. As we quadruple the number of simulations, the standard deviation is roughly divided by two which is coherent with the theory. The convergence with the exponential law is erratic once more.

In the case of gamma distribution, the standard deviation with resampling is nearly half of the one without resampling clearly showing the interest in this method.
Results with $\kappa=0.5$ and $\kappa =0.3$ are very similar especially with resampling, but the number of jumps increases as $\kappa$ decreases and  the computational time is
nearly doubled with $\kappa=0.3$ indicating that the optimal choice is to take $\kappa=0.5$.

\subsubsection{ $\sigma(t,x)= 0.5 \vee x^2 \wedge 1$.}
This case is more difficult than the two first ones. 
The reference value is  $0.2100$. We keep $\theta=2.5$ and we use   $\kappa =0.3$ and $\kappa=0.5$.
Figure \ref{FigCase3}, reveals that without resampling the SMC method shows difficulties to converge, while the resampling SMC method is always converging with a standard deviation decreasing at the rate $1/\sqrt{n_{\textrm{part}}}$.  Besides the exponential case doesn't seem to converge at all.
\begin{figure}[H]
\centering
\includegraphics[width=7.5cm]{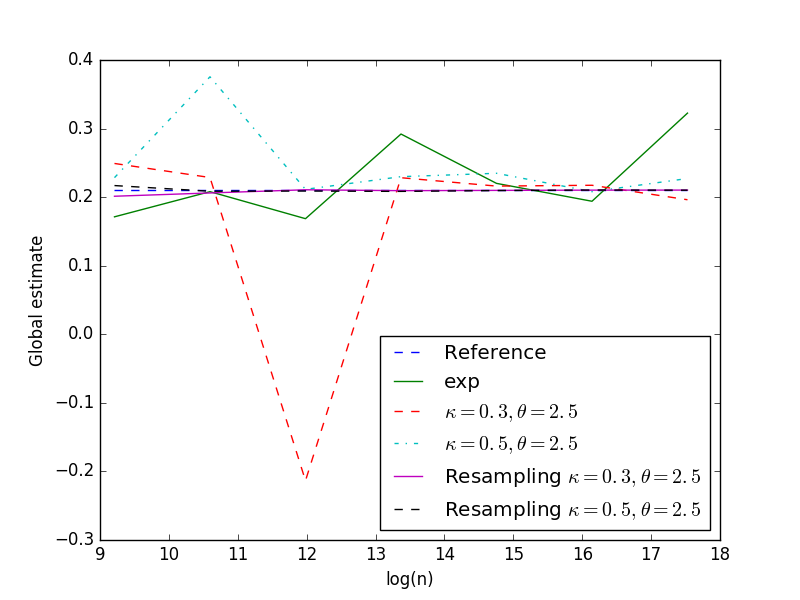}
\includegraphics[width=7.5cm]{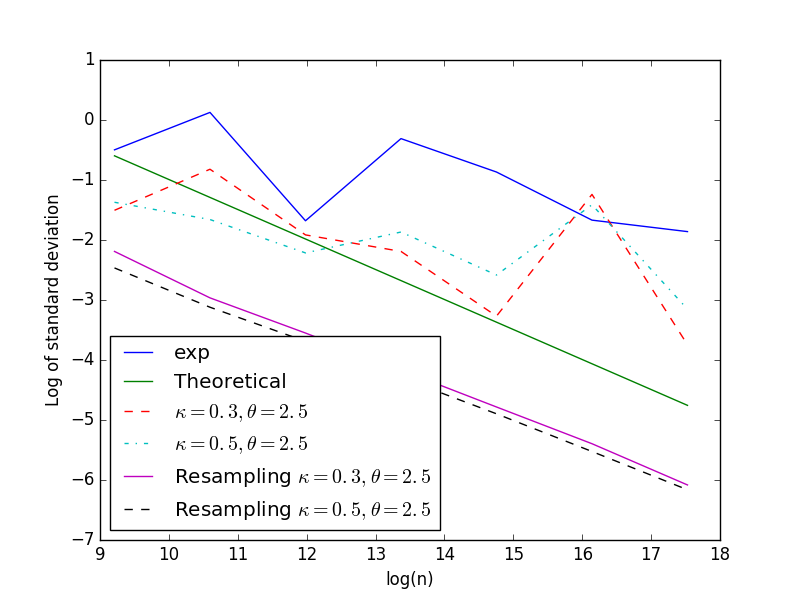}
\caption{Estimation and standard deviation observed for case 4.}
\label{FigCase3}
\end{figure}

\subsection{Some four dimensional cases with $g(x)= (\frac{1}{d} \sum_{i=1}^d x_i-1)^{+}$.}
The diffusion coefficient is such that $\sigma(t,x)= 0.5 + a \big ( (\sum_{i=1}^4 x_i)^2\wedge 1\big )\mathbb{I}_d$ for any $x\in \R^4$ and for a given positive real, $a$. 
\subsubsection{$a=0.4$}
The reference solution is $0.11806$. We keep $\theta=2.5$ for the gamma distribution.
For this first case, we plot on Figure \ref{FigCase5}   the results obtained with the exponential distribution and the
gamma distribution with and without resampling. 
\begin{figure}[H]
\centering
\includegraphics[width=7.5cm]{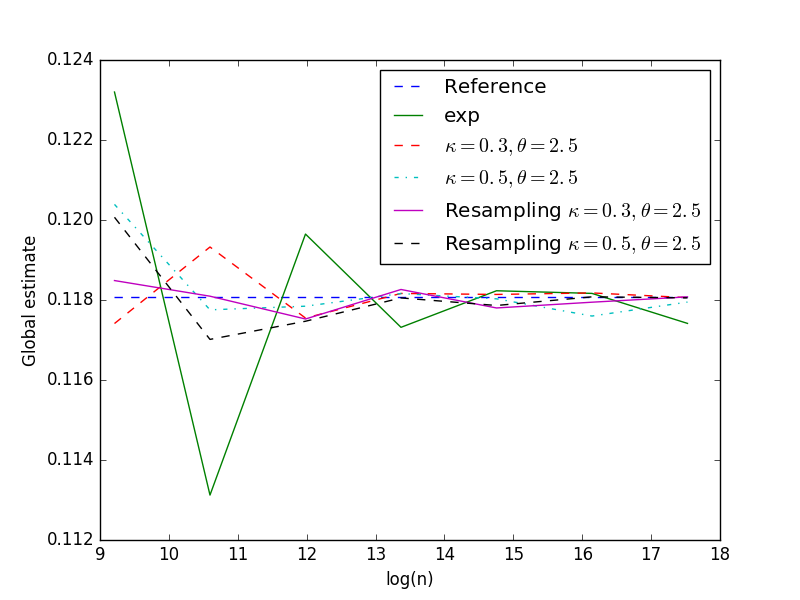}
\includegraphics[width=7.5cm]{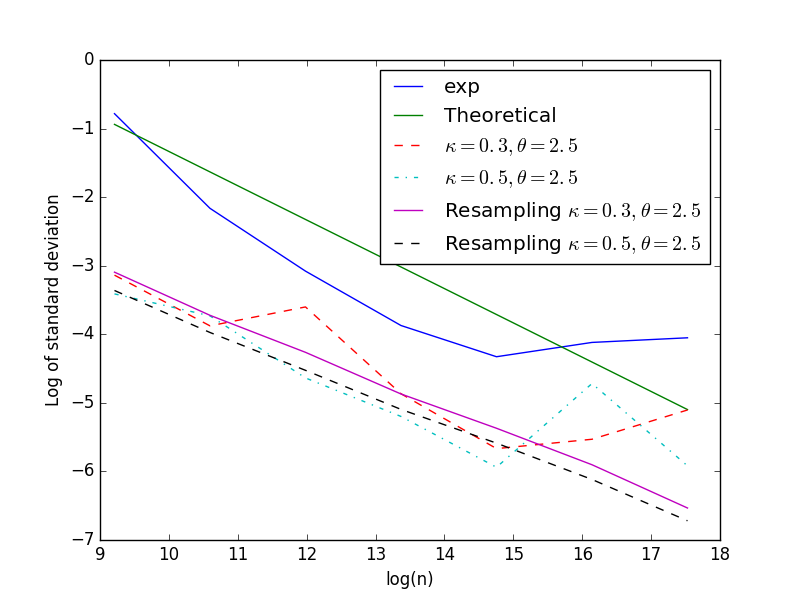}
\caption{Estimation and standard deviation  observed for the first 4 dimensional test case.}
\label{FigCase5}
\end{figure}
On Figure~\ref{FigCase5}, one can observe that for the resampling SMC method with gamma switching times, the log of the standard deviation decreases linearly, whereas without resampling, the decrease of standard deviation is not regular either with gamma or exponential switching times.  On this test case, resampling is effective by reducing the standard deviation by a factor rougly equal to 2 and by stabilizing the results.
\subsubsection{$a=0.6$}
We give the results obtained without resampling on Figure \ref{FigCase6}. The method doesn't seem to converge when a gamma distribution is used without resampling
or when an exponential distribution is used.
\begin{figure}[H]
\centering
\includegraphics[width=7.5cm]{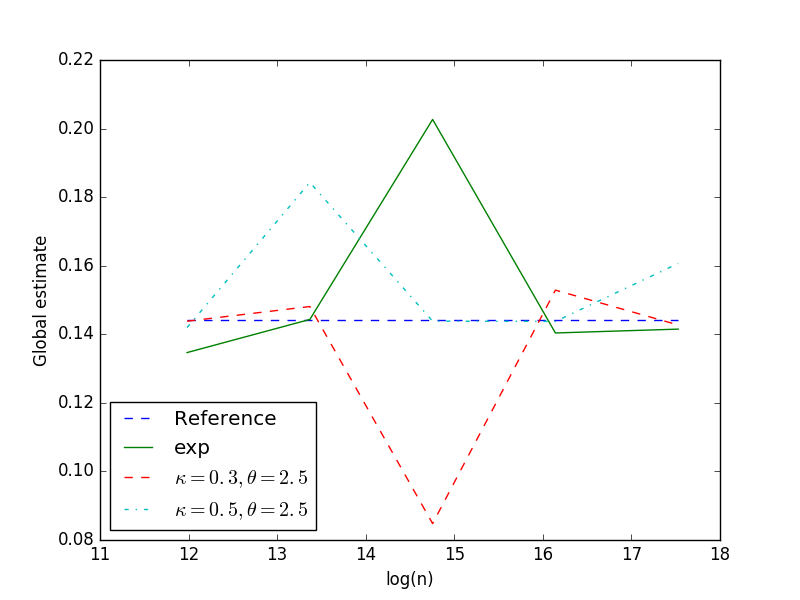}
\includegraphics[width=7.5cm]{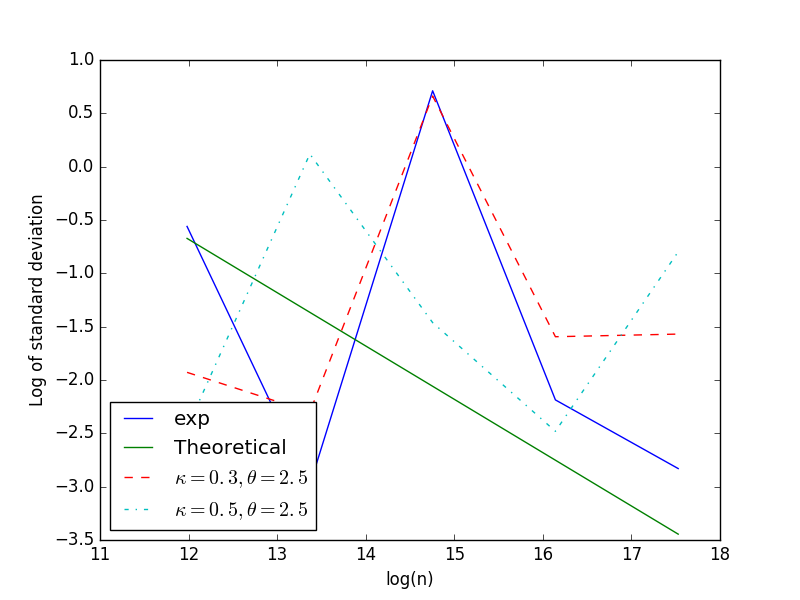}
\caption{Estimation and standard deviation observed for the second 4 dimensional test case for gamma distributions without resampling.}
\label{FigCase6}
\end{figure}

On Figure \ref{FigCase6Zoom}, we only give the results obtained with resampling and the gamma distribution taking different parameters for $\theta$ and $\kappa$. 
We notice that the standard deviation calculated are far higher than in the previous case.
We have difficulties to get the theoretical linear reduction in  the standard deviation. The influence of $\theta$ parameter is not clear on the curves, but because  higher $\theta$ gives  higher jumps, it gives  smaller computational times.
\begin{figure}[H]
\centering
\includegraphics[width=7.5cm]{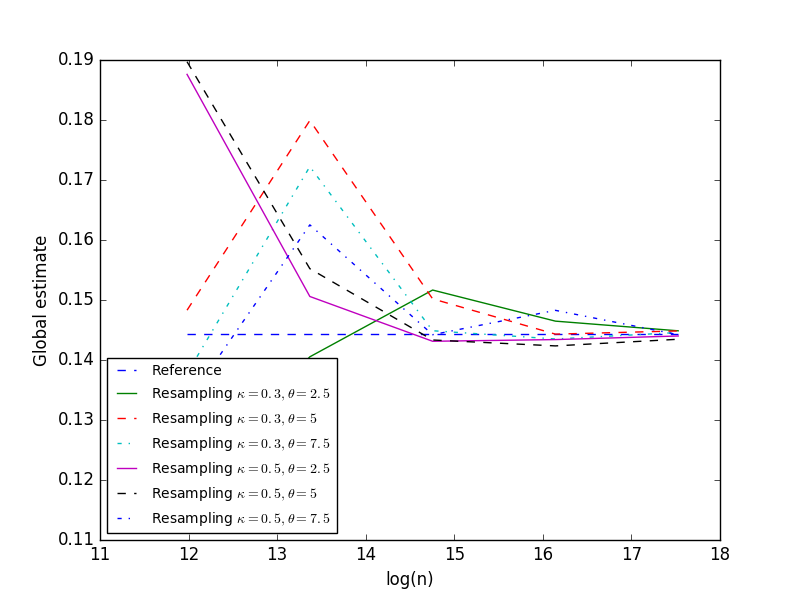}
\includegraphics[width=7.5cm]{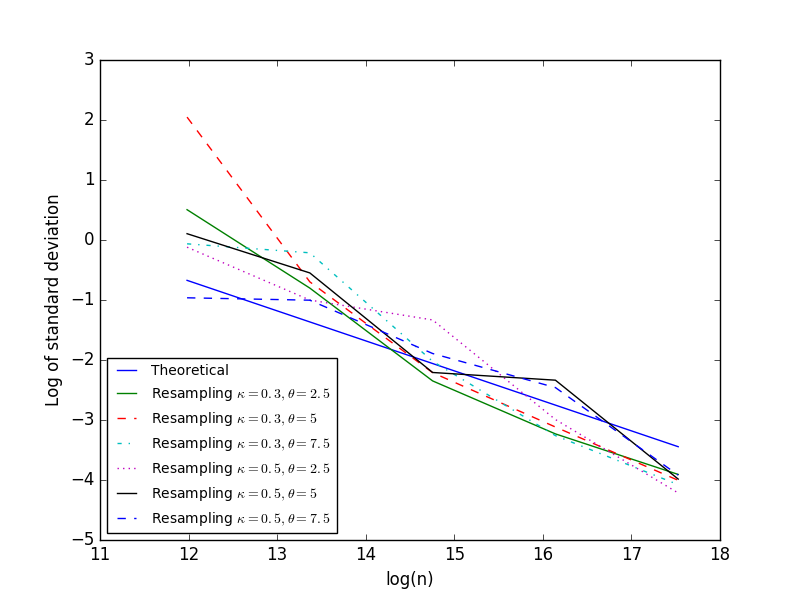}
\caption{Estimation and standard deviation observed for the second 4 dimensional test case for gamma distributions with resampling.}
\label{FigCase6Zoom}
\end{figure}

\section{Appendix: Usefull notations and proof of Lemma~\ref{lem:gammaN}}
\label{sec:Appendix}

\subsection{Classical notations and results}
Let us recall somme classical notations and results stated in~\cite{DelMoral,DelMoral2}. We consider 
a Markov chain $(X'_n)$ (with initial probability $\mu_0$ and transition kernel $K_k$) taking values on a sequence of measurable spaces $(E_n, \mathcal{E}_n)$ and a sequence of positive potential functions $(G_n)$ defined on $(E_n, \mathcal{E}_n)$. 
\begin{itemize}
	\item For all $0\leq p\leq n$,  we define the sigma-finite measure $\gamma_p\in \mathcal{M}(E_p)$ such that for any real valued bounded test function $f_p$ defined on $E_p$, 
$$
\gamma_p(f_p)=\E[\,f_p(X'_p)\prod_{0\leq k\leq p-1} G_k(X'_k)\,]\ .
$$
	\item For all $0\leq p\leq n$, let us define the probability measure $\eta_p$  obtained by normalization of $\gamma_p$ and such that for any real valued bounded test function $f_p$ defined on $E_p$, 
$$
\eta_p(f_p)=\frac{\gamma_p(f_p)}{\gamma_p(1)}\ .
$$
Notice that $\gamma_p$ can be written as the following product involving the probability measures $\eta_1,\cdots \eta_p$, 
$$
\gamma_p(f_p)=\eta_p(f_p) \prod_{0\leq k\leq p-1} \eta_k(G_k)\ .
$$
	\item For all $1\leq p\leq n$, let us introduce the nonlinear operator, $\Phi_p$ defined on the space of sigma-finite and non-negative measures $\mathcal{M}^+(E_{p-1})$ and taking values in $\mathcal{M}^+(E_p)$ such that
\begin{equation}
\label{eq:Phi}
\Phi_p(m_{p-1}):=(G_{p-1}\cdot m_{p-1})K_p\ ,\quad \textrm{for any}\quad m_{p-1} \in \mathcal{M}^+(E_{p-1})\ .
\end{equation}
Then the nonlinear evolution of $(\eta_p)$ can be summarized by 
\begin{equation}
\label{eq:phiEvol}
\eta_{p+1}=\Phi_{p+1}(\eta_p)\ ,
\end{equation}

	\item For all $0\leq p\leq n$, let us define the Feynman-Kac semi-group $Q_{p,n}$ associated to the distribution flow $(\gamma_p)_{1\leq p\leq n}$ such that for all $x'_{p}\in E_p$ and any real valued bounded test function $f_n$ defined on $E_n$, 
\begin{equation}
\label{eq:qpn:def}
Q_{p,n}(f_n)(x'_{p})=\E[\,f_n(X'_{n}) \prod_{p\leq k\leq n-1} G_k( X'_{k})\,\vert\, X'_{p}= x'_{p}\,]\ ,
\quad \textrm{with}\ Q_{n,n}=Id\ ,
\end{equation}
Notice that for all $0\leq p\leq n$,  $\gamma_n$ can be written as the transformation of $\gamma_p$ via $Q_{p,n}$, 
\begin{equation}
\label{eq:gammap}
\gamma_n=\gamma_p Q_{p,n}
\ .
\end{equation}
Moreover, for any sigma-finite non-negative measure $\mu\in \mathcal{M}^+(E_p)$, 
\begin{equation}
\label{eq:phip:qp}
\Phi_p(\mu)=\frac{\mu Q_{p-1,p}}{(\mu Q_{p-1,p})(1)} \ .
\end{equation}
\item We introduce $(\gamma_n^N)$ the particle approximation sequence of measures  as defined on Lemma~\ref{lem:gammaN}. 
Notice that by definition~\eqref{eq:gammapN} of $\gamma^N_p$ the following relation holds 
\begin{equation}
\label{eq:etapN:gammapN}
\eta_p^N(f_p)=\frac{\gamma_p^N(f_p)}{\gamma_p^N(1)}\ .
\end{equation}
 \item Let $f_n$ be a real valued test function defined on $E_n$, then the error between $\gamma_n^N(f_n)$ and $\gamma_n(f_n)$ can be decomposed as the sum of $n$ "local errors" as follows, using relation~(\ref{eq:gammap})
\begin{eqnarray*}
(\gamma_n^N-\gamma_n)(f_n)
&=&\sum_{p=1}^n [\gamma_p^N Q_{p,n}-\gamma_{p-1}^N Q_{p-1,n} ](f_n)\quad \textrm{(recalling  that} \ \gamma_0^N:=\gamma_0\textrm{)}\\\\
&=&\sum_{p=1}^n [\gamma_p^N-\gamma_{p-1}^N Q_{p-1,p}]\big (Q_{p,n}(f_n)\big)\ ,
\end{eqnarray*}
Then using relations~(\ref{eq:phip:qp}) and~(\ref{eq:etapN:gammapN}) and recalling that $\gamma_p^N(\mathbf{1})=\gamma_{p-1}^N(G_{p-1})$ yields 
\begin{equation}
\label{eq:decom:gamma}
(\gamma_n^N-\gamma_n)(f_n)
=\sum_{p=1}^n \gamma_{p-1}^N(G_{p-1}) [\eta_p^N-\Phi_p(\eta_{p-1}^N)]\big (Q_{p,n}(f_n)\big)\ .
\end{equation}
\item Let us introduce the following notations 
\begin{equation}
\label{eq:DG}
\begin{array}{lll}
\Gamma_p^N(f_p)&:=&(\gamma_{p}^N-\gamma_{p})(f_p)\ ,\\
\Delta_{p}^N(f_p)&:=&[\eta_p^N-\Phi_p(\eta_{p-1}^N)]\big (f_p\big )=\frac{1}{N}\sum_{i=1}^N f_p(\xi_p^i)-\Phi_p(\eta_{p-1}^N)(f_p)\ .
\end{array}
\end{equation}
\item For any $p\geq 0$, let us introduce the $\sigma$-algebra $\mathcal{G}_p$ generated by the particle system until the $p$-th generation, 
observe that since 
$$
\eta_p^N=S^N\big (\Phi_p(\eta_{p-1}^N)\big )=\frac{1}{N}\sum_{i=1}^N \delta _{\xi^i_p}\ ,
$$
where $(\xi^1_p,\cdots ,\xi^N_p)$ are i.i.d. according to $\Phi_p(\eta_{p-1}^N)$ conditionally to $\mathcal{G}_{p-1}$. Thus 
\begin{equation}
\label{eq:biasDG}
\E[\Delta ^N_{p}(f_p)\,\vert\, \mathcal{G}_{p-1}]=\frac{1}{N}\sum_{i=1}^N \E[f_p(\xi^i_p)\vert \mathcal{G}_{p-1}]-\Phi_p(\eta_{p-1}^N)(f_p)=0\ ,
\quad\textrm{and}\quad 
\E[\Gamma_p^N(f_p)]=0\ .
\end{equation}
\item Moreover one can bound the conditional variance of $\Delta ^N_{p}(f_p)$ as follows 
\begin{eqnarray}
\label{eq:DeltaV}
\E[(\Delta ^N_{p}(f_p))^2\,\vert\, \mathcal{G}_{p-1}]&=&\E[\big ((\eta_p^N-\Phi_p(\eta_{p-1}^N)) (f_p)\big)^2\,\vert\, \mathcal{G}_{p-1}] \nonumber \\
&=&\E[\big (\frac{1}{N}\sum_{i=1}^N f_p(\xi^i_p)-\Phi_p(\eta_{p-1}^N)\big)^2\,\vert\, \mathcal{G}_{p-1}] \nonumber \\
&=&
\frac{1}{N}[\Phi_p(\eta_{p-1}^N)) (f_p^2)-\big (\Phi_p(\eta_{p-1}^N)) (f_p)\big )^2] \nonumber\\
&\leq &
\frac{1}{N}\Phi_p(\eta_{p-1}^N)) (f_p^2) \nonumber \\
&=&
\frac{1}{N}(G_{p-1}\cdot \eta_{p-1}^N)(K_p(f_p^2))\ .
\end{eqnarray}
\end{itemize}
We are now in a position to prove Lemma~\ref{lem:gammaN}.

\subsection{Proof of Lemma~\ref{lem:gammaN}}

Let us introduce the notation $G_{k,p}:=Q_{k,p}(G_p)$ for any $p\geq k\geq 0$. 
Recalling~\eqref{eq:decom:gamma} and notations~\eqref{eq:DG} gives 
\begin{eqnarray*}
\Gamma_k^N(G_{k,p})&=&\sum_{q=1}^k \gamma^N_{q-1}(G_{q-1}) [\eta_q^N-\Phi_q(\eta_{q-1}^N)]\big (Q_{q,k}(G_{k,p})\big)\\
&=&\gamma^N_{k-1}(G_{k-1}) \Delta_k^N\big (Q_{k,k}(G_{k,p})\big)+\sum_{q=1}^{k-1} \gamma^N_{q-1}(G_{q-1}) \Delta_q^N\big (Q_{q,k-1}(Q_{k-1,k}(G_{k,p}))\big)\\
&=&\gamma^N_{k-1}(G_{k-1})\Delta_{k}^N(G_{k,p})+\Gamma^N_{k-1}(G_{k-1,p})\ .
\end{eqnarray*}
Using~\eqref{eq:biasDG} stating that  $\E[\Delta ^N_{k}(G_{k,p})\,\vert\, \mathcal{G}_{k-1}]=0$ gives 
$$
\E[\big (\Gamma^N_k(G_{k,p})\big )^2\,\vert\, \mathcal{G}_{k-1}]
=
\big (\gamma^N_{k-1}(G_{k-1})\big )^2\E[\big (\Delta_{k}^N(G_{k,p})\big )^2\,\vert\, \mathcal{G}_{k-1}]+\big (\Gamma^N_{k-1}(G_{k-1,p})\big )^2\ .
$$
Recalling assumption~\eqref{eq:assG}, observe that for any $ x'_{k-1}\in E_{k-1}$ 
\begin{eqnarray*}
K_k(G^2_{k,p})( x'_{k-1})&=&\E\big [ (\E[ G_{k:p}( X'_{p})\vert X'_{k}])^2\big \vert X'_{k-1}=x'_{k-1}]\\
&\leq &\E\big [ G^2_{k:p}(X'_{p}) \vert X'_{k-1}=x'_{k-1}]\\
&\leq & A^{p-k+1}<\infty \ .
\end{eqnarray*}
Then using the bound~\eqref{eq:DeltaV}, with  $p=k$ and $G_{k,p}$ as a test function implies 
$$
\E[\big (\Delta_{k}^N(G_{k,p})\big )^2\,\vert\, \mathcal{G}_{k-1}]\leq \frac{A^{p-k+1}}{N}\ .
$$
Using the above inequality and recalling that $\gamma^N_{k-1}(G_{k-1})=\gamma_{k-1}(G_{k-1})+\Gamma^N_{k-1}(G_{k-1})$ yields 
$$
\E[\big (\Gamma^N_k(G_{k,p})\big )^2\,\vert\, \mathcal{G}_{k-1}]
\leq 
\frac{A^{p-k+1}}{N} [\big (\gamma_{k-1}(G_{k-1})\big )^2+\big (\Gamma^N_{k-1}(G_{k-1})\big )^2]+\big (\Gamma^N_{k-1}(G_{k-1,p})\big )^2\ .
$$
Again recall that by assumption~\eqref{eq:assG} 
$$
\gamma_{k-1}(G_{k-1}):=\E[G_{1:k-1}(X'_{k-1})]\leq (\E[G^2_{1:k-1}(X'_{k-1})])^{1/2}\leq A^k<\infty \ ,
$$
which finally yields 
$$
\E[\big (\Gamma^N_k(G_{k,p})\big )^2]
\leq 
\frac{A^{p+1}}{N} (1+\E[\big (\Gamma^N_{k-1}(G_{k-1})\big )^2])+\E[\big (\Gamma^N_{k-1}(G_{k-1,p})\big )^2]\ .
$$
Adding the above inequality from $k=1$ to $k=p$ gives for any $p\leq n$
$$
\E[\big (\Gamma^N_p(G_{p})\big )^2]
\leq 
\frac{A^{p+1}}{N} \sum_{k=1}^p(1+\E[\big (\Gamma^N_{k-1}(G_{k-1})\big )^2])\ .
\leq 
\frac{A^{n+1}}{N} \sum_{k=1}^p(1+\E[\big (\Gamma^N_{k-1}(G_{k-1})\big )^2])\ .
$$
We obtain by recursion
\begin{equation}
\label{eq:BoundGammaNp}
\E[\big (\Gamma^N_p(G_{p})\big )^2]
\leq 
(1+\frac{A^{n+1}}{N})^p-1
\ .
\end{equation}
Now let us consider a  test function $f_n$ verifying assumption~\eqref{eq:assf}.  Using again~\eqref{eq:biasDG} stating that  $\E[\Delta ^N_{k}(G_{k,})\,\vert\, \mathcal{G}_{k-1}]=0$ gives 
$$
\E[\big (\Gamma_n^N(f_n)\big )^2]
=
\sum_{p=1}^n \E[\big (\gamma^N_{p-1}(G_{p-1})\big )^2 \big (\Delta_p^N(Q_{p,n}(f_n))\big )^2]\ .
$$
By~\eqref{eq:DeltaV} and Assumption~\eqref{eq:assf}, we obtain $\E[\big (\Delta_p^N(Q_{p,n}(f_n))\big )^2\,\vert\mathcal{G}_{p-1}]\leq B/N$ which yields 
\begin{eqnarray*}
\E[\big (\Gamma_n^N(f_n)\big )^2]
&\leq &
\frac{B}{N}\sum_{p=1}^n \E[\big (\gamma^N_{p-1}(G_{p-1})\big )^2 ]\\
&\leq & 2\frac{B}{N}\sum_{p=1}^n\left (\big (\gamma_{p-1}(G_{p-1})\big )^2 +\E[\big (\Gamma^N_{p-1}(G_{p-1})\big )^2] \right )\\
&\leq & 2\frac{B}{N}\sum_{p=1}^n\left ( A^p +\E[\big (\Gamma^N_{p-1}(G_{p-1})\big )^2] \right )\quad \textrm{since}\ (\gamma_{p-1}(G_{p-1}))^2\leq A^p\ .
\end{eqnarray*}
By~\eqref{eq:BoundGammaNp} we finally get 
$$
\E[\big (\Gamma_n^N(f_n)\big )^2]
\leq  2\frac{B}{N}\sum_{p=1}^n\left ( A^p +(1+\frac{A^{n+1}}{N})^{p-1}-1 \right )
\leq  2\frac{B}{N}\sum_{p=1}^n A^{p+1} \leq  2\frac{B}{N}A^{p+2} \ .
$$
as soon as $N\geq A^{n+1}$ and $A\geq 2$. 

\section{Appendix: Technicalities related to the proof of Lemma~\ref{lem:Represent}}
\label{sec:Technics}
This section provides technical arguments allowing for differentiating under the integral sign that are necessary to prove the second and third identity of~\eqref{eq:Represent}. 

\subsection{Concerning the second identity of~\eqref{eq:Represent}}
Assume the first identity of~\eqref{eq:Represent} is verified. 
Let us introduce the real valued function such that for any $(s,\tilde t, \tilde x, t',x')\in [t',T]\times [0,T]\times \R^d\times [0,T]\times \R^d$  
\begin{equation}
\label{eq:phis}
\phi^{\tilde t, \tilde x}(s,t',x'):=\E [h^{\ast,\tilde t,\tilde x}(s,\tilde X^{\tilde t,\tilde x, t', x'}_s)]\ ,
\end{equation}
where $(\tilde X^{\tilde t,\tilde x, t', x'}_s)$ is the Gaussian process defined by~\eqref{eq:tildeX} and $h^{\ast,\tilde t,\tilde x}$ is the real valued function defined on $[0,T]\times \R^d$ by~\eqref{eq:h}. 
Recalling identity~\eqref{eq:uFK} and using Fubini's lemma gives 
\begin{equation}
\label{eq:uphi}
u(t', x')=\E[g(\tilde X_T^{\tilde t, \tilde x, t' , x'})]+\int_{t'}^T \phi^{\tilde t, \tilde x}(s,t',x')\,ds\ .
\end{equation}
Notice that by a simple application of Elworthy's formula~\cite{fournier1} (which simply results here in the Likelihood ratio of Broadie and Glasserman~\cite{BG96}), we get 
\begin{eqnarray}
\label{eq:Dphi}
D\phi^{\tilde t, \tilde x}(s,t',x')
&=&\E [h^{\ast,\tilde t,\tilde x}(s,\tilde X^{\tilde t,\tilde x, t', x'}_s)\mathcal{M}^{\tilde t,\tilde x}_{t',s}] \nonumber \\
&=&\int_{\R^d} 
\left [ h^{\ast,\tilde t,\tilde x}(s,x'+b(\tilde t,\tilde x)(s-t')+\sqrt{s-t'}\sigma (\tilde t,\tilde x) u) \right .\nonumber \\
&&
\left .\times \frac{(\sigma(\tilde t,\tilde x)^{-1})^\top}{\sqrt{s-t'}} u p(u)\right ]\,du\ ,
\end{eqnarray}
where $p$ denotes the centered and standard Gaussian density on $\R^d$ and $\mathcal{M}^{\tilde t,\tilde x}_{t',s}$ is the Malliavin weight defined at~\eqref{eq:Malliavin}. 
Recall that $b$ and $a$ are Lipschitz w.r.t. the space variable and $1/2$-H\"older continuous w.r.t. the time variable as stated at item 2. and 3. of Assumption~\ref{ass:unique2} and that  $Dv^\ast$ and $D^2v^\ast$ are bounded as stated in  Assumption~\ref{ass:unique1}. 
%
    Thus there exists a finite constant $C$ depending on $T$ and that may change from line to line such that  
    \begin{eqnarray*}
      \Vert h^{\ast,\tilde t,\tilde x}(s,x'+b(\tilde t,\tilde x)(s-t')+\sqrt{s-t'}\sigma (\tilde t,\tilde x)  u)\Vert 
      &\leq & 
      C  (1+\Vert \tilde x-x'\Vert +\Vert b(\tilde t , \tilde x) \Vert  +  \Vert \sigma (\tilde t, \tilde x) \Vert   \Vert u\Vert )\ .
    \end{eqnarray*}
    Thus for any $x'$ such that $\Vert \tilde x-x'\Vert \leq C$.
    \begin{eqnarray}
      \label{eq:Dphib}
      \Vert D \phi^{\tilde t, \tilde x}(s,t',x')\Vert 
      &\leq & 
      C \frac{ \Vert \sigma(\tilde t,\tilde x)^{-1} \Vert }{\sqrt{s-t'}}\Big (1 +\Vert b(\tilde t , \tilde x) \Vert  +  \Vert \sigma (\tilde t, \tilde x) \Vert \Big ) ,
    \end{eqnarray}    
Since the term on the r.h.s of the above inequality is integrable w.r.t $s$ on $[t',T]$ one can differentiate under the integral sign in~\eqref{eq:uphi} 
which ends the proof of the second identity of~\eqref{eq:Represent}. 

\subsection{Concerning the third identity of~\eqref{eq:Represent}}

Let us introduce the $\R^d$ valued function $\Psi$ such that for any $(\tilde t, \tilde x, t',x')\in  [0,T]\times \R^d\times [0,T]\times \R^d$  
\begin{equation}
\label{eq:Psi}
\Psi(\tilde t, \tilde x, t',x'):=\int_t^T \psi(s,\tilde t,\tilde x, t',x') ds
\end{equation}
where 
$\psi(s,\tilde t,\tilde x, t',x'):=\psi_1(s,\tilde t,\tilde x, t',x')+\psi_2(s,\tilde t, \tilde x, t',x')$ with
\begin{equation}
\label{eq:psi12}
\begin{array}{l}
\psi_1(s,\tilde t,\tilde x, t',x'):=\frac{1}{T-t} \E[g(\tilde X_T^{ \tilde t,\tilde x, t',x'})\mathcal{M}^{\tilde t,\tilde x}_{t',T}]\\ \\
\psi_2(s,\tilde t, \tilde x, t',x'):=\E[h^{\tilde t,\tilde x}(s,\tilde X^{\tilde t,\tilde x, t',x'}_s)\mathcal{M}^{\tilde t,\tilde x}_{t',s}] \ .
\end{array}
\end{equation}
Observe that $\psi=(\psi^1,\cdots ,\psi^d)$ is differentiable w.r.t. the variable $x'$ and for any $j=1,\cdots \,,_, d$ and $i=1,\cdots \,,\,d$
$$
\frac{\partial \psi^j}{\partial x'_i}(s,\tilde t,\tilde x, t',x')=\frac{\partial \psi^j_1}{\partial x'_i}(s,\tilde t,\tilde x, t',x')+\frac{\partial \psi^j_2}{\partial x'_i}(s,\tilde t,\tilde x, t',x')\ ,
$$
where
\begin{equation}
\label{eq:psi12Deriv}
\begin{array}{l}
\frac{\partial \psi^j_1}{\partial x'_i}(s,\tilde t,\tilde x, t',x')=\Big (\frac{1}{T-t}\E[g(\tilde X_T^{ \tilde t, \tilde x, t',x'})\mathcal{V}^{ \tilde t, \tilde x}_{t',T}]\Big )_{i,j}\\ \\
\frac{\partial \psi^j_2}{\partial x'_i}(s,\tilde t,\tilde x, t',x')=\Big (\E[ h^{\ast,\tilde t, \tilde x}(s,X_s^{\tilde t,\tilde x,t',x'})\mathcal{V}^{ \tilde t, \tilde x}_{t',s}]\big )_{i,j}
\end{array}
\end{equation}
Notice that $\Psi$ does not depend on the pair $(\tilde t,\tilde x)$, hence one can fix $\tilde t=t'=t\in [0,T]$. 
Let us consider a fixed point $x\in\R^d$, two indexes  $i,j\in \{1,\cdots d\}$, and a point $\tilde x\in \R^d$ such that each coordinate $\tilde x_\ell=x_\ell$ for any $\ell\neq i$ and $\tilde x_i$ is fixed at a given value. We want to prove that $\frac{\partial \Psi^j}{\partial x'_i}(t,\tilde x, t,x)$ exists and is continuous (which implies the differentiability of $\Psi$) and to give an explicit expression for it. 
By the mean value theorem, there exists a real $\theta^j (t, \tilde x_i, x,h) \in [-1,1]$ (to simplify the notations we will forget the dependence on $t$) such that for any $i=1,\cdots d$
\begin{align}
\label{eq:ref}
\frac{1}{2h}[\Psi^j(t, \tilde x, t,x+he_i)- &\Psi^j(t, \tilde x, t,x-he_i)] \nonumber \\
&=
\int_t^T \frac{1}{2h} 
[\psi^j(s,t, \tilde x, t,x+he_i)-\psi^j(s,t, \tilde x, t,x-he_i)]\,ds \nonumber\\
&=
\int_t^T\frac{\partial \psi^j}{\partial x'}(t,\tilde x, t, x+\theta^j(\tilde x_i, x,h)h e_i)\,ds\ ,
\end{align}
where $e_i$ denotes the vector of $\R^d$ with zeros coordinates except for the $i^{th}$ coordinate that equals $1$. 
Observe that in full generality the real $\theta^j(\tilde x_i, x,h)$ resulting from the mean value theorem depends on $(\tilde x,x,h)$ and not only on $(\tilde x_i,x,h)$. However, since we consider the specific situation where $\tilde x_j=x_j$ for all $j\neq i$ one can express this real as a function of  $(\tilde x_i, x,h)$.  
Consider the following equation w.r.t. the variable $\tilde x_i \in \R$ 
$$
\lambda ^{i,j} _{x,h}(\tilde x_i)=0
\ ,\quad \textrm{where} \quad \lambda ^{i,j}_{x,h}(\tilde x_i):=x_i+\theta ^j(\tilde x_i,x,h)h-\tilde x_i \ .
$$
One can check that there exists a solution $\hat x_i(x,h)$ to this equation. Indeed, taking $\tilde x_i=x_i+h$ and $\tilde x_i=x_i-h$ and recalling that $\theta ^j(\tilde x_i,x,h)\in [-1,1]$, we obtain 
$$
\lambda^{i,j} _{x,h}(x_i+h)=h(\theta ^j(x_i+h,x,h)-1)\leq 0\ ,\quad \textrm{and}\quad \lambda^{i,j} _{x,h}(x_i-h)=h(\theta ^j(x_i-h,x,h)+1)\geq 0
$$
which, by continuity of $\lambda^{i,j} _{x,h}$, implies the existence of a solution. 
Now we choose to take in equation~\eqref{eq:ref}, $\tilde x$ as the vector having the same coordinates as $x$ except that $\tilde x_i=\hat x_i(x,h)$, this vector will be denoted by $\hat x(x,h)$.  Observe that by construction choosing $\tilde x=\hat x(x,h)$ implies
$$
\tilde x=x+\theta^j(\tilde x_i, x,h)h e_i\ .
$$
We are now interested in the limit of~\eqref{eq:ref} as $h\rightarrow 0$. 
The technical point will consist in applying Lebesgue theorem to permute the limit with the integral sign. 
First, by Lebesgue theorem, and observing that $\hat x_i(x,h)\rightarrow x_i$ when $h\rightarrow 0$, we have 
\begin{eqnarray*}
 \lim_{h\rightarrow 0} 
\int_t^T\frac{\partial \psi ^j_1}{\partial x'_i}(s,t,\hat{x}(x,h), t, \hat x(x,h))\,ds&=&
\int_t^T\lim_{h\rightarrow 0} 
\frac{\partial \psi ^j_1}{\partial x'_i}(s,t,\hat{x}(x,h), t, \hat x(x,h))\,ds\\
&=&
\int_t^T \frac{\partial \psi ^j_1}{\partial x'_i}(s,t,x, t, x)\,ds\ .
\end{eqnarray*}
Considering the integral term involving $\psi^j_2$, using the Lipschitz and $\alpha$-H\"older properties of $b$ and $\sigma$, we get
 $$ | h^{\ast,t, \hat x(x,h)}(s,\tilde X^{t,\hat x(x,h), t, \hat x(x,h)}_s )| \le  C(\hat x(x,h),t) \big [(s-t)^\alpha +\vert W_t-W_s\vert\big ]
$$
 so 
$$ \vert \frac{\partial \psi^j_2}{\partial x'} (s,t,\hat{x}(x,h), t, \hat x(x,h)) \vert \le \frac{C(\hat x(x,h),t)}{(s-t)^{1-\alpha \wedge 1/2}}$$  
where $C(\hat x(x,h),t)$ is locally  bounded due to the non degeneracy hypothesis  and the Lipschitz properties in assumption \ref{ass:unique2}.
 The rhs of the previous equation is  integrable  so that we can use the Lebesgues Theorem, 
\begin{eqnarray*}
 \lim_{h\rightarrow 0} 
\int_t^T\frac{\partial \psi ^j_2}{\partial x'_i}(s,t,\hat{x}(x,h), t, \hat x(x,h))\,ds&=&
\int_t^T\lim_{h\rightarrow 0} 
\frac{\partial \psi ^j_2}{\partial x'_i}(s,t,\hat{x}(x,h), t, \hat x(x,h))\,ds\\
&=&
\int_t^T \frac{\partial \psi ^j_2}{\partial x'_i}(s,t,x, t, x)\,ds\ .
\end{eqnarray*}
We finally obtain 
$$
\frac{\partial \Psi^j}{\partial x'_i}(\tilde t,\tilde x, t,x)=\Big (\E[g(\tilde X_T^{ t, x, t,x})\mathcal{V}^{  t, x}_{t,T}]\Big )_{i,j}+\int_t^T\Big (\E[ h^{\ast,t,  x}(s,X_s^{ t, x,t,x})\mathcal{V}^{ t,  x}_{t,s}]\big )_{i,j} \, ds
$$
which ends the proof.

\section*{AKNOWLEDGMENTS}
The authors are very grateful to the anonymous Referees for their careful reading of the paper and the suggestions which have largely contributed to improve the first submitted version.

\bibliographystyle{plain}
\bibliography{Branchement}

\end{document}